\newcommand{\N}{{\mathds N}}
\newcommand{\Z}{{\mathds Z}}
\theoremstyle{plain}\newtheorem{theo}{Theorem}[section]
\theoremstyle{plain}\newtheorem{assu}[theo]{Assumption}
\theoremstyle{plain}\newtheorem{lem}[theo]{Lemma}
\theoremstyle{plain}
\theoremstyle{plain}
\theoremstyle{definition}
\theoremstyle{definition}
\theoremstyle{definition}
\begin{document}

\title[Bootstrapping Covariance Operators]{Bootstrapping Covariance Operators of Functional Time Series}

\author[O.Sh. Sharipov]{Olimjon Sh. Sharipov}
\author[M. Wendler]{Martin Wendler}

\address{Otto-von-Guericke-Universit\"at Magdeburg, Germany}
\email{martin.wendler@ovgu.de}

\date{\today}

\begin{abstract} For testing hypothesis on the covariance operator of functional time series, we suggest to use the full functional information and to avoid dimension reduction techniques. The limit distribution follows from the central limit theorem of the weak convergence of the partial sum process in general Hilbert space applied to the product space. In order to obtain critical values for tests, we generalize  bootstrap results from the independent to the dependent case. This results can be applied to covariance operators, autocovariance operators and cross covariance operators. We discuss one sample and changepoint tests and give some simulation results.
\end{abstract}

\keywords{Covariance Operator; Autocovariance; Functional Time Series; Bootstrap}

\subjclass[2010]{62M10; 62G09; 60F17}

\maketitle

\section{Introduction}\label{sec1}

\subsection{Literature overview}

In recent years, there has been a growing interest in statistical methods for functional data analysis, where the observations are often modeled as random variables taking values in a Hilbert space, see the book by Horv\'ath and Kokoszka \cite{horvath2012} for an introduction, and the articles by Goia and Vieu \cite{goia2016}, Aneiros et al. \cite{aneiros2019} for an overview of recent results. So called first order analysis of functional data deals with inference for the expectation, typically estimated by the sample mean.

Second order properties, meaning the covariance operators, of functional time series have been studied: Zhang and Sun \cite{zhang2010} proposed an $L_2$-norm based test for the hypothesis that the covariance functions of two functional populations are equal. Fremdt et al. \cite{fremdt2013} used dimension reduction via functional principal components for this testing problem. Jaru\v{s}kov\'a \cite{jaruskova2013} also studied two-sample tests for the covariance operator based on dimension reduction, but also a test for a change of the covariance operator at an unknown changepoint in a series of independent functional observations. The comparison of the covariance operators for mutliple samples was studied by Boente et al. \cite{boente2014} as well as Guo and Zhang \cite{guo2016}. Possible distances to compare covariance operators in function spaces were discussed by Pigoli et al. \cite{pigoli2014}.

Statistical inference for covariance operators has not only been studied for independent functional observations, but also for time series, starting with Bosq \cite{bosq2002}, who gave the first results on the empirical covariance operator for functional autoregressive processes. Panaretos and Tavakoli \cite{panaretos2013} have studied asymptotic properties of the estimated spectral density operator. Horv\'ath and Rice \cite{horvath2015} proposed a test for the hypothesis of independence against the alternative that the covariance operator is not equal to 0. Rice and Shum \cite{rice2017} also studied statistical procedures for the cross covariance operator, including a changepoint test. Two-sample tests for the covariance operators of two times series were considered by Zhang and Shao \cite{zhang2015}. Aue et al. \cite{aue2018} have studied tests for changepoint of the spectrum or the trace of covariance operators. Stoehr et al. \cite{stoehr2019} have developed a test for changes of the covariance operator and applied it to MRI data.

When studying functional data, it is common to use dimension reduction techniques like functional principal components. For example, in the context of changepoint detection for functional data, Berkes et al. \cite{berkes2009} used a projection on a finite number of functional principal components. Sharipov et al. \cite{sharipov2016} and Aue et al. \cite{aue2018a} proposed to use the full functional information without dimension reduction instead. While Aue et al. \cite{aue2018a} still rely on the estimation of eigenvalues to calculate critical values, Sharipov et al. \cite{sharipov2016} used bootstrap methods to estimate the distribution of the test statistic without such techniques. In the context of inference for the covariance operator, Boente et al. \cite{boente2014} have used bootstrap under the assumption of independence. Paparoditis and Sapatinas \cite{paparoditis2016} also assumed independence when studying bootstrap-based tests for covariance operators. Stoehr et al. \cite{stoehr2019} combined dimension reduction with block bootstrap to obtain critical values for changes in the covariance operator of functional time series. Recently,  Pilavakis, Paparoditis and Sapatinas \cite{pilavakis2019} have proved the validity of the moving block bootstrap for covariance operators of functional time series.

The aim of this paper is to show that results for bootstrapping the sample mean of functional times series (see \cite{dehling2015},\cite{sharipov2016}) can be extended to empirical covariance operators. This allows statistical inference for covariance operators without dimension reduction and without estimating eigenvalues. We will show that the (centered and rescaled) empirical covariance operator and its bootstrap counterpart converge to the same limit distribution. In a simulation study, we compare the nonoverlapping block bootstrap method to other methods proposed in the literature (\cite{rice2017}, \cite{pilavakis2019}). We will also study the sequential bootstrap, so that we can apply our theory to test for changes of the covariance operator in a functional time series. In the rest of this section, we discuss Hilbert-Schmidt operators, introduce our dependence conditions and describe the block bootstrap methods. The main results are given in Section \ref{sec2}. The finite sample performance is investigated in Section \ref{sec3} in a simulation study. Some auxiliary lemmas and the proofs of the main results follow in the last section.
 
\subsection{Hilbert-Schmidt operators}

Let $(X_n)_{n\in\Z}$ be a stationary time series with values in a separable Hilbert space $H$ with inner product $\langle \cdot,\cdot\rangle_H$ and norm $\|\cdot\|_H:=\sqrt{\langle \cdot,\cdot\rangle}_H$. In this paper, we are interested in studying different hypothesis on the covariance operator $V_X:H\rightarrow H$ of $X_n$, given by the relation
\begin{equation*}
\langle V_X(h_1),h_2\rangle_H=E\left[\langle X_n-EX_n,h_1\rangle_H\langle X_n-EX_n,h_2\rangle_H\right]
\end{equation*}
for $h_1,h_2\in H$. $E$ denotes the expectation of a random variable, whatever space it takes its values in. $V_X$ is an element of the product space $H\otimes H$, and this is a Hilbert space equipped with the Hilbert-Schmidt inner product $\langle V_1, V_2\rangle_{\operatorname{HS}}=\sum_{i=1}^\infty \langle V_1(b_i), V_2(b_i)\rangle_H$ for any orthonormal basis $(b_i)_{i\in\N}$ of $H$. The corresponding Hilbert-Schmidt norm will be denoted by $\|\cdot\|_{\operatorname{HS}}$. By Lemma \ref{lemMoment}, it follows that the covariance operator has a finite Hilbert-Schmidt norm almost surely, if $E\|X_i\|_H^2<\infty$. This will allow us to apply existing results on limit theorems and bootstrap consistency for the inference on the covariance operator.

The autocovariance operator $V_k$ to the lag $k$ is given by
\begin{equation*}
\langle V_k(h_1),h_2\rangle_H=E\left[\langle X_n-EX_n,h_1\rangle_H\langle X_{n+k}-EX_{n+k},h_2\rangle_H\right]
\end{equation*}
for $h_1,h_2\in H$. This falls into the framework of covariance operators if we consider the time series $(X_n,X_{n+k})_{n\in\Z}$ with values in the direct sum $H\oplus H$ equipped with the inner product given by $\langle (h_1,h_2),(h_3,h_4)\rangle_{H\oplus H} = \langle h_1,h_3 \rangle_H+\langle h_2,h_4 \rangle_H$. Let $V_{(X_0,X_k)}$ be the covariance operator of this time series, then $\langle V_k(h_1),h_2\rangle_H=\langle V_{(X_0,X_k)}(h_1,0),(0,h_2) \rangle_{H\oplus H}$.

Similarly, for a second time series $(Y_n)_{n\in\Z}$ with values in separable Hilbert space $G$ with inner product $\langle \cdot,\cdot\rangle_G$ and norm $\|\cdot\|_G:=\sqrt{\langle \cdot,\cdot\rangle}_G$, the cross covariance operator $V_{XY}: H\rightarrow G$ is given by
\begin{equation*}
\langle V_{XY}(h),g\rangle_G=E\left[\langle X_n-EX_n,h\rangle_H\langle Y_n-EY_n,g\rangle_G\right]
\end{equation*}
for $h\in H$, $g\in G$. In a similar way as above, we can consider the direct sum $H\oplus G$ with inner product $\langle\cdot,\cdot\rangle_{H\oplus G}$ and the covariance operator $V_{(X_n,Y_n)}$ of the time series $(X_n,Y_n)_{n\in\Z}$ . We observe that 
$\langle V_{XY}(h),g\rangle_G=\langle V_{(X_n,Y_n)}(h,0),(0,g) \rangle_{H\oplus G}$. So we do not have to deal with the three cases (covariance operator, autocovariance operator for lag $k$ and cross covariance operator) separately in our theoretical results.

We have that $V_X=E[(X_i-EX_i)\otimes (X_i-EX_i)]$, so a natural estimator of the covariance operator is the empirical covariance operator
\begin{equation*}
\hat{V}_n:=\frac{1}{n}\sum_{i=1}^n (X_i-\bar{X})\otimes (X_i-\bar{X}),
\end{equation*}
where $\bar{X}=\frac{1}{n}\sum_{i=1}^n X_i$ and $\otimes$ denotes the tensor product. Our aim is to prove the weak convergence of $\hat{V}_n$ (after centering and rescaling) in the space $(H\otimes H, \|\cdot\|_{\operatorname{HS}})$. Furthermore, we want to study the sequential version of the empirical covariance operator $\hat{V}_{[nt]}$, $t\in[0,1]$ (where $[.]$ denotes the integer part of a real number). This will allow us to apply our theory to changepoint problems.

\subsection{Dependence conditions}

Let $(X_n)_{n\in\Z}$ be a stationary sequence of $H$-valued random variables. We say that the sequence is $L_p$-near epoch dependent (NED) on a stationary sequence $(\xi_n)_{n\in\Z}$ (taking values in a separable space $S$), if for all $m\in\N$ there exists a function $f_m:S^{2m+1}\rightarrow H$, such that 
\begin{equation*}
E\left\|X_0-f_m(\xi_{-m},\ldots,\xi_m)\right\|^p_H\leq a_m 
\end{equation*}
and $a_m\rightarrow 0$ as $m\rightarrow \infty$. We call the sequence $a_m$, $m\in\N$ the approximation constants. In what follows, we will assume that the sequence $(\xi_n)_{n\in\Z}$ is absolutely regular ($\beta$-mixing). We define the coefficients of absolute regularity $(\beta_m)_{m\in\N}$ by
\begin{equation*}
\beta_m=\Big|E\sup_{A\in\mathcal{F}_{m}^\infty}\left(P(A|\mathcal{F}_{-\infty}^{0})-P(A)\right)\Big|,
\end{equation*}
where $\mathcal{F}_a^b:=\sigma(\xi_a,\xi_{a+1},\ldots,\xi_b)$ is the sigma-field generated by $\xi_a,\xi_{a+1},\ldots,\xi_b$.

The combination of two notions of weak dependence (absolute regularity and near epoch dependence) covers many relevant time series models. For instance, stationary autoregressive processes and GARCH(1,1) processes have an exponential decay of the approximation constants. Many dynamical systems are also covered, see Borovkova et al. \cite{borovkova2001} for details.

\subsection{Bootstrap}

As we will see, the limit of the empirical covariance operator will depend on the long run covariance operator of the sequence of tensor products $((X_n-EX_n)\otimes (X_n-EX_n))_{n\in\Z}$. It is not easy to estimate this infinite dimensional parameter. For this reason, we propose to use the nonoverlapping block bootstrap method introduced by Carlstein \cite{carlstein1986}, where we sample blocks of Hilbert space valued vectors without dimension reduction. The sample of length $n$ is divided in $k=[n/p]$ blocks $I_1,\ldots,I_k$ of length $p$:
\begin{equation*}
I_j=\left(X_{(j-1)p+1},X_{(j-1)p+2},\ldots,X_{jp}\right)
\end{equation*}
We choose $p=p_n$, such that $p\rightarrow\infty$ as $n\rightarrow\infty$ and $p_n/n\rightarrow 0$. Then we produce a new bootstrap sample $X^\star_{1},\ldots,X^\star_{kp}$ by drawing $k$ times with replacement from these blocks:
\begin{equation*}
P\left((X_{(i-1)p+1}^\star,X_{(i-1)p+2}^\star,\ldots,X_{ip}^\star)=I_j\right)=\frac{1}{k}\ \ \ \text{for} \ \ \ i,j=1,\ldots,k.
\end{equation*}
With $P^\star$ and $E^\star$, we denote the probability and expectation conditional on $X_1,\ldots,X_n$, so for $\bar{X}^\star_{n}:=\frac{1}{kp}\sum_{i=1}^{kp}X_i^\star$, we have
\begin{equation*}
E^\star\left[\bar{X}^\star_{n}\right]=E^\star\big[\frac{1}{p}\sum_{i=1}^{p}X_i^\star\big]=\frac{1}{k}\sum_{j=1}^k\frac{1}{p}\sum_{i=jp-p+1}^{jp}X_i=\frac{1}{kp}\sum_{i=1}^{kp}X_i.
\end{equation*}
Note that we can exchange the bootstrap procedure and the tensor product, so that $X_i^\star \otimes X_i^\star=(X_i \otimes X_i)^\star$.

\section{Main results}\label{sec2}

\subsection{Limit theorems}

For the times series $(Y_n)_{n\in\Z}$ with values in $H\otimes H$ given by
\begin{equation*}
Y_n:=(X_n-EX_n)\otimes(X_n-EX_n),
\end{equation*}
we define the long run covariance operator $C_\infty$ by
\begin{equation}\label{LRC}
\langle C_\infty(x),y\rangle_{\operatorname{HS}}=\sum_{k=-\infty}^{\infty}E\big[\langle Y_0,x\rangle_{\operatorname{HS}}\langle Y_k,y\rangle_{\operatorname{HS}}\big]
\end{equation}
for $x,y\in H\otimes H$. We will first state a central limit theorem for the empirical covariance operator
\begin{equation*}
\hat{V}_n:=\frac{1}{n}\sum_{i=1}^n (X_i-\bar{X})\otimes (X_i-\bar{X}).
\end{equation*}

\begin{theo}\label{limitfixed} Let $(X_n)_{n\in\Z}$ be a stationary sequence with marginal covariance operator $V_X$ and let it be $L_2$-NED with approximating constants $(a_m)_{m\in\N}$ on an absolutely regular process with mixing coefficients $(\beta_m)_{m\in\N}$, such that for some $\delta>0$
\begin{enumerate}
\item $E\big\|X_i\big\|_{H}^{4+\delta}<\infty$,
\item $\sum_{m=1}^\infty a_{m}^{\delta/(2+2\delta)}<\infty$,
\item $\sum_{m=1}^\infty \beta_m^{\delta/(4+\delta)}<\infty$.
\end{enumerate}
Then we have the weak convergence
\begin{equation}
\sqrt{n}\left(\hat{V}_n-V_X\right)\Rightarrow N(0,C_\infty),
\end{equation}
where $N(0,C_\infty)$ denotes the Gaussian distribution in $H\otimes H$ with mean 0 and covariance operator $C_\infty$ given by (\ref{LRC}).
\end{theo}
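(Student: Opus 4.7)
The plan is to reduce the statement to a Hilbert-space central limit theorem for a stationary sequence in $H\otimes H$, plus a negligible remainder. Without loss of generality assume $EX_i=0$, so that $V_X=E[X_i\otimes X_i]$; expanding the centering by $\bar X$ gives
\[
\sqrt n\,(\hat V_n-V_X)=\frac{1}{\sqrt n}\sum_{i=1}^n Y_i-\sqrt n\,\bar X\otimes\bar X,\qquad Y_i:=X_i\otimes X_i-V_X.
\]
Since $\|\bar X\otimes\bar X\|_{\operatorname{HS}}=\|\bar X\|_H^2$ and the standard $L_2$-bound for partial sums of weakly dependent Hilbert-valued random variables gives $E\|\bar X\|_H^2=O(1/n)$ under our assumptions, the remainder satisfies $\sqrt n\,\|\bar X\|_H^2=O_p(n^{-1/2})=o_p(1)$. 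By Slutsky it therefore suffices to prove
\[
\frac{1}{\sqrt n}\sum_{i=1}^n Y_i\ \Rightarrow\ N(0,C_\infty)\quad\text{in }(H\otimes H,\|\cdot\|_{\operatorname{HS}}).
\]

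For this I would invoke the existing central limit theorem for $L_2$-NED sequences on absolutely regular processes with values in a separable Hilbert space (cf.~\cite{dehling2015}), now applied to the separable Hilbert space $H\otimes H$. The CLT requires three inputs on $(Y_i)$: a moment of order strictly above $2$, $L_2$-NED on $(\xi_n)$ with sufficiently summable constants, and summability of $(\beta_m)$. The moment is immediate from $\|Y_i\|_{\operatorname{HS}}\le\|X_i\|_H^2+\|V_X\|_{\operatorname{HS}}$ together with assumption (1), yielding $E\|Y_i\|_{\operatorname{HS}}^{2+\delta/2}<\infty$; the mixing exponent $\delta/(4+\delta)$ in assumption (3) is precisely the one associated to $2+\delta/2$ moments. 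Finally, the limiting covariance $C_\infty$ is by definition (\ref{LRC}) exactly the long-run covariance of $(Y_i)$, and its finite Hilbert--Schmidt norm follows from the other conditions.

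The delicate step is verifying $L_2$-NED for $(Y_i)$. Letting $f_m(\xi_{-m},\ldots,\xi_m)$ be the $L_2$-approximant of $X_0$ with $E\|X_0-f_m\|_H^2\le a_m$, take $g_m:=f_m\otimes f_m-V_X$ as the candidate approximant of $Y_0$. Then
\[
\|Y_0-g_m\|_{\operatorname{HS}}\le\|X_0-f_m\|_H\bigl(\|X_0\|_H+\|f_m\|_H\bigr),
\]
and one has to combine H\"older's inequality (absorbing $\|X_0\|_H$ and $\|f_m\|_H$ using the $(4+\delta)$-th moment in assumption (1)) with an interpolation of $\|X_0-f_m\|_H$ between its $L_2$-bound $\sqrt{a_m}$ and its uniform $L_{4+\delta}$-bound. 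Optimising these exponents converts the NED rate $a_m$ for $(X_i)$ into a fractional power of $a_m$ for $(Y_i)$; when raised to the summability exponent demanded by the cited Hilbert-space CLT this produces exactly $a_m^{\delta/(2+2\delta)}$, so that assumption (2) is calibrated precisely to close the chain of inequalities. This exponent bookkeeping is the main obstacle; once carried out, the cited CLT together with Slutsky yields the theorem.
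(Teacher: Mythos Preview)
Your overall strategy---reduce to the Hilbert-space CLT of \cite{dehling2015} applied to $Y_i=(X_i-EX_i)\otimes(X_i-EX_i)$ and kill the remainder $\sqrt n\,(\bar X-EX_1)\otimes(\bar X-EX_1)$ by Slutsky---is exactly what the paper does. The remainder is handled in the paper by noting that $\sqrt n(\bar X-EX_1)$ converges weakly (hence is tight) while $\bar X-EX_1\to 0$; your $L_2$ bound achieves the same thing.

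The one place you diverge is the NED verification, and here you make it harder than necessary. The CLT in \cite{dehling2015} requires only $L_1$-NED of $(Y_i)$, not $L_2$-NED. The paper's Lemma~\ref{lemNED} obtains $L_1$-NED with constants $C\sqrt{a_m}$ by a one-line Cauchy--Schwarz argument: from $\|x\otimes y\|_{\operatorname{HS}}=\|x\|_H\|y\|_H$ one gets
\[
E\|X_0\otimes X_0-f_m\otimes f_m\|_{\operatorname{HS}}\le \big(E\|X_0-f_m\|_H^2\big)^{1/2}\Big(\big(E\|X_0\|_H^2\big)^{1/2}+\big(E\|f_m\|_H^2\big)^{1/2}\Big)\le C\sqrt{a_m}.
\]
No H\"older against the $(4{+}\delta)$-th moment and no $L_2/L_{4+\delta}$ interpolation are needed; the ``exponent bookkeeping'' you flag as the main obstacle disappears. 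Your interpolation route could in principle be pushed through, but you leave the crucial exponent computation as a claim (``produces exactly $a_m^{\delta/(2+2\delta)}$''), and it is not obvious that the $L_2$-NED rate you would obtain, once fed into the summability condition of the CLT, reproduces assumption~(2) without loss. The paper sidesteps this entirely by working at the $L_1$ level.
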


The idea to prove this and the other limit theorems will be to apply known results on weak convergence (or bootstrap) in general Hilbert spaces to the time series $(Y_n)_{n\in\Z}$ in the first step, and then to show that the empirical covariance operator can be approximated by partial sums of the process in a general Hilbert space to the time series $(Y_n)_{n\in\Z}$.

Next, we will give a sequential limit theorem which will be needed to detect changes in the covariance operator. For this aim, we define the process $(W_n(t))_{t\in[0,1]}$ with
\begin{equation*}
W_n(t):=\frac{[nt]}{\sqrt{n}}\left(\hat{V}_{[nt]}-V_X\right)
\end{equation*}
in the space $D_{H\otimes H}[0,1]$ of cadlag functions with values in $H\otimes H$.  

\begin{theo}\label{limitseq} Let $(X_n)_{n\in\Z}$ be a stationary sequence and let it be $L_2$-NED with approximating constants $(a_m)_{m\in\N}$ on an absolutely regular process with mixing coefficients $(\beta_m)_{m\in\N}$, such that for some $\delta>0$
\begin{enumerate}
\item $E\big\|X_i\big\|_{H}^{8+\delta}<\infty$,
\item $\sum_{m=1}^\infty m^2 a_{m}^{\delta/(12+2\delta)}<\infty$,
\item $\sum_{m=1}^\infty m^2\beta_m^{\delta/(8+\delta)}<\infty$.
\end{enumerate}
Then we have the weak convergence
\begin{equation}
W_n\Rightarrow W
\end{equation}
where $W$ is a Brownian motion with values in $H\otimes H$, such that the increments $W(t)-W(s)$ have the covariance operator $|t-s|C_\infty$.
\end{theo}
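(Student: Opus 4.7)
Without loss of generality, assume $EX_n = 0$. Write $Y_i := X_i \otimes X_i$, so that $EY_i = V_X$, and set $S_k := \sum_{i=1}^k X_i$. Expanding the tensor products $(X_i-\bar X_{[nt]})\otimes(X_i-\bar X_{[nt]})$ and summing yields the algebraic identity
\begin{equation*}
W_n(t) = \frac{1}{\sqrt{n}}\sum_{i=1}^{[nt]}(Y_i - V_X) \;-\; \frac{S_{[nt]}\otimes S_{[nt]}}{\sqrt{n}\,[nt]},
\end{equation*}
with the second term understood as $0$ when $[nt]=0$. The plan is to show that the first term converges weakly to $W$ in $D_{H\otimes H}[0,1]$ and that the second, centering-correction, term is asymptotically uniformly negligible.

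For the main term, I would apply an invariance principle for partial-sum processes of $L_2$-NED sequences on a $\beta$-mixing base in a separable Hilbert space, of the type used in \cite{dehling2015} and \cite{sharipov2016}. This requires lifting the dependence and moment hypotheses from $(X_n)$ to the $(H\otimes H)$-valued sequence $(Y_n)$. The moment bound is immediate since $E\|Y_i\|_{\operatorname{HS}}^{(8+\delta)/2} = E\|X_i\|_H^{8+\delta}<\infty$. The $L_2$-NED property of $(Y_n)$ on the same $(\xi_n)$ follows from the triangle-inequality bound
\begin{equation*}
\|X_i\otimes X_i - f\otimes f\|_{\operatorname{HS}} \le \|X_i - f\|_H \bigl(\|X_i\|_H + \|f\|_H\bigr),
\end{equation*}
combined with Cauchy--Schwarz and Lyapunov interpolation against the $(8+\delta)$-th moment of $X_i$; this yields approximation constants $\tilde a_m \le C a_m^{\gamma}$ for an explicit $\gamma>0$. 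The exponents appearing in conditions (2) and (3) are chosen precisely so that the resulting weighted summability conditions for $(Y_n)$ match the hypotheses of the Hilbert-valued invariance principle (the extra $m^2$ weight being what is required for tightness in Skorokhod space). The long-run covariance operator of $(Y_n-V_X)$ is exactly $C_\infty$ from (\ref{LRC}), producing a Brownian limit with increments of covariance $|t-s|C_\infty$.

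The main obstacle is the uniform control of the correction term $R_n(t) := S_{[nt]}\otimes S_{[nt]}/(\sqrt n\,[nt])$, whose Hilbert--Schmidt norm equals $\|S_{[nt]}\|_H^2/(\sqrt n\,[nt])$. For fixed $t>0$ the ordinary CLT for $(X_n)$ gives $\|S_{[nt]}\|_H^2/[nt] = O_P(1)$, hence $\|R_n(t)\|_{\operatorname{HS}} = O_P(1/\sqrt n)$, but this is not uniform near $t=0$. I would combine a Doob-type maximal bound of the form $E\max_{1\le k\le N}\|S_k\|_H^2 \le CN$ (available for $L_2$-NED sequences on $\beta$-mixing bases under the present moment and summability assumptions) with a dyadic decomposition: on $k\in[2^{j},2^{j+1})$ one has $\|S_k\|_H^2/k \le 2^{-j}\max_{k\le 2^{j+1}}\|S_k\|_H^2$, which is $O_P(1)$ by Markov; a union bound over the $\lfloor\log_2 n\rfloor$ dyadic blocks gives $\max_{1\le k\le n}\|S_k\|_H^2/k = O_P(\log n)$, so that
\begin{equation*}
\sup_{t\in[0,1]}\|R_n(t)\|_{\operatorname{HS}} = O_P(\log n/\sqrt n) = o_P(1).
\end{equation*}
Slutsky's theorem in $D_{H\otimes H}[0,1]$ then yields $W_n \Rightarrow W$, as claimed.
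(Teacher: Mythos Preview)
Your argument is correct, but it differs from the paper's in two respects worth noting.

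First, you center $\hat V_{[nt]}$ by the partial mean $\bar X_{[nt]}$, which is the natural reading of the definition. The paper's proof instead centers by the \emph{full} sample mean $\bar X_n$; with $EX_i=0$ their remainder becomes
\[
\tilde D_{m,n}=\frac{m}{\sqrt n}\Bigl(\bar X_n\otimes(\bar X_m-\bar X_n)+(\bar X_m-\bar X_n)\otimes\bar X_n+\bar X_n\otimes\bar X_n\Bigr),
\]
after simplification a sum of terms each of the form $\bigl(\tfrac{m}{\sqrt n}\bar X_m\bigr)\otimes \bar X_n$. This factorization is the point: $\max_{m\le n}\|\tfrac{m}{\sqrt n}\bar X_m\|_H=\max_{m\le n}\tfrac{1}{\sqrt n}\|S_m\|_H$ is $O_P(1)$ directly from the invariance principle for $(X_n)$ in $H$ (it converges to the sup of a Brownian motion), while $\|\bar X_n\|_H=o_P(1)$. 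So the paper gets uniform negligibility in one line, without a maximal inequality or dyadic splitting. Your version, with both factors tied to the running index $m$, genuinely needs the extra $\log n$ argument you supply; that argument is fine, but it is working harder than necessary only because of the choice of centering.

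Second, a minor point: the paper does not establish $L_2$-NED for $(Y_n)$. Its Lemma~\ref{lemNED} gives $L_1$-NED with constants $C\sqrt{a_m}$, which is what Theorem~1 of \cite{sharipov2016} requires; your interpolation sketch towards $L_2$-NED is plausible but unnecessary. Otherwise, your treatment of the main term---transferring moments via $\|x\otimes x\|_{\operatorname{HS}}=\|x\|_H^2$ and invoking the Hilbert-space invariance principle---is exactly the paper's strategy.
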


The limit theorem for the sequential empirical covariance operator can be used to deduce the asymptotic distribution of the CUSUM statistic $CS_n:=\sup_{t\in[0,1]}\|W_n(t)-tW_n(1)\|_{\operatorname{HS}}$, which is used to obtain changepoints, see section \ref{sec3}.

\subsection{Bootstrap consistency}

The bootstrap version of the empirical covariance operator is defined as 
\begin{equation*}
V_n^{\star}:=\frac{1}{kp}\sum_{i=1}^{kp} (X_i^\star-\bar{X}^\star)\otimes (X_i^\star-\bar{X}^\star),
\end{equation*}
with $\bar{X}^\star=\frac{1}{kp}\sum_{i=1}^{kp}X_i^\star$. To guarantee the conditional weak convergence of the bootstrap version, we need the following assumption on the block length:
\begin{assu}\label{assublock} Assume that the block length $p=p_n$ is nondecreasing and satisfies
\begin{enumerate}
\item $p_n\rightarrow \infty$ and $p_n/n\rightarrow 0$ as $n\rightarrow\infty$,
\item $p_n=O(n^{1-\epsilon})$ for some $\epsilon>0$,
\item $p_{2^{l-1}+1}=p_{2^{l-1}+2}=\ldots=p_{2^l}$ for all $l\in\N$.
\end{enumerate}
\end{assu}

Under this conditions on the block length and dependence conditions similar to the conditions of Theorem \ref{limitfixed}, we obtain bootstrap consistency:

\begin{theo}\label{bootfixed} Let $(X_n)_{n\in\Z}$ be a stationary sequence and let it be $L_2$-NED with approximating constants $(a_m)_{m\in\N}$ on an absolutely regular process with mixing coefficients $(\beta_m)_{m\in\N}$, such that for some $\delta>0$ and $\delta'\in(0,\delta)$
\begin{enumerate}
\item $E\big\|X_i\big\|_{H}^{4+\delta}<\infty$,
\item $\sum_{m=1}^\infty a_{m}^{\delta'/(2+2\delta')}<\infty$ and $\sum_{i=1}^\infty m^{3/2}a_m^{1/2}<\infty$,
\item $\sum_{m=1}^\infty \beta_m^{\delta'/(4+\delta')}<\infty$ and $\sum_{m=1}^\infty m\beta_m<\infty$.
\end{enumerate}
If Assumption \ref{assublock} holds for the block length, we have 
\begin{equation}
\sqrt{pk}\left(V_n^\star-\hat{V}_n\right)\Rightarrow^\star N(0,C_\infty),
\end{equation}
where $N(0,C_\infty)$ is the Gaussian distribution with covariance operator $C_\infty$ given by (\ref{LRC}), and where $\Rightarrow^\star$ denotes weak convergence conditional on $(X_n)_{n\in Z}$ in probability.
\end{theo}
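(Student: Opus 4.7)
The plan is to reduce the bootstrap consistency of the empirical covariance operator to a bootstrap CLT for partial sums of the Hilbert-space--valued sequence $Y_n=(X_n-EX_n)\otimes(X_n-EX_n)$, and then to invoke the block-bootstrap invariance principle in a general separable Hilbert space (available from Sharipov et al.\ \cite{sharipov2016} in combination with Dehling et al.\ \cite{dehling2015}) applied to $(Y_n)_{n\in\Z}$ with values in $H\otimes H$. Setting $Z_i=X_i-EX_i$ and $Z_i^\star=X_i^\star-EX_i$, so that $X_i-\bar X=Z_i-\bar Z$ and similarly for the starred quantities, and using the identity $E^\star[(kp)^{-1}\sum_{i=1}^{kp}Z_i^\star\otimes Z_i^\star]=(kp)^{-1}\sum_{j=1}^{kp}Z_j\otimes Z_j$ that follows from the block construction, expanding the tensor products yields the decomposition
\begin{equation*}
\sqrt{kp}\bigl(V_n^\star-\hat V_n\bigr)=\frac{1}{\sqrt{kp}}\sum_{i=1}^{kp}\bigl(Y_i^\star-E^\star Y_i^\star\bigr)-\sqrt{kp}\bigl(\bar Z^\star\otimes\bar Z^\star-\bar Z\otimes\bar Z\bigr)+R_n,
\end{equation*}
where $Y_i^\star=Z_i^\star\otimes Z_i^\star$ is the block-bootstrap of $Y_i$ (legal because the bootstrap commutes with the tensor product) and $R_n$ is the negligible discrepancy arising from $kp<n$.

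For the mean-type term I write $\bar Z^\star-\bar Z=\bar X^\star-\bar X$ and split it as $(\bar X^\star-\bar X)\otimes\bar Z^\star+\bar Z\otimes(\bar X^\star-\bar X)$. Using $\|u\otimes v\|_{\operatorname{HS}}=\|u\|_H\|v\|_H$, the known bootstrap CLT for the functional mean (which yields $\sqrt{kp}\,\|\bar X^\star-\bar X\|_H=O_{P^\star}(1)$ in probability under our hypotheses) and $\sqrt n\,\|\bar Z\|_H=O_P(1)$ from the classical CLT, each summand has Hilbert--Schmidt norm of order $O_{P^\star}(n^{-1/2})$ and hence vanishes after multiplication by $\sqrt{kp}\sim\sqrt n$. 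The heart of the proof is then the first summand, for which I would apply the block-bootstrap CLT directly to the stationary sequence $(Y_n)$ in $H\otimes H$. This requires two facts: (i) the moment bound $E\|Y_n\|_{\operatorname{HS}}^{2+\delta/2}<\infty$, which follows from $E\|X_n\|_H^{4+\delta}<\infty$ since $\|Y_n\|_{\operatorname{HS}}=\|Z_n\|_H^2$; and (ii) a suitable $L_p$-NED property of $(Y_n)$ on the same $\beta$-mixing base $(\xi_n)$, obtained by taking $\tilde Y_n^{(m)}=\tilde X_n^{(m)}\otimes\tilde X_n^{(m)}$ with $\tilde X_n^{(m)}=f_m(\xi_{n-m},\dots,\xi_{n+m})$, bounding
\begin{equation*}
\|Y_n-\tilde Y_n^{(m)}\|_{\operatorname{HS}}\le\|Z_n-\tilde X_n^{(m)}\|_H\bigl(\|Z_n\|_H+\|\tilde X_n^{(m)}\|_H\bigr),
\end{equation*}
and finishing with H\"older's inequality matched to the available $L_{4+\delta}$ moment of $X_n$. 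The limit covariance is identified as $C_\infty$ of (\ref{LRC}) by the standard long-run variance computation for $(Y_n)$.

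The main obstacle is this transfer of the dependence structure from $X_n$ to the quadratic functional $Y_n$: the H\"older step unavoidably weakens the rate of decay of the approximation constants for $Y$ compared to those for $X$, which is precisely why hypothesis (2) pairs the qualitative summability $\sum a_m^{\delta'/(2+2\delta')}<\infty$ with the stronger quantitative bound $\sum m^{3/2}a_m^{1/2}<\infty$, and why hypothesis (3) is similarly structured. The first ingredients feed the conditional central limit theorem for finite-dimensional projections of the $Y$-partial sums and the identification of the limit covariance, while the quantitative bounds, together with $\sum m\beta_m<\infty$, deliver the bootstrap second-moment control required for conditional tightness in $H\otimes H$ uniformly across the dyadic block-length schedule built into Assumption \ref{assublock}. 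Once these conditions are translated into admissible inputs for the existing Hilbert-space block-bootstrap CLT, the conclusion follows.
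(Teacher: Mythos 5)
Your proposal is correct and follows essentially the same route as the paper: reduce to the block-bootstrap CLT in Hilbert space (Theorem 1.2 of \cite{dehling2015}) applied to $Y_n=(X_n-EX_n)\otimes(X_n-EX_n)$ in $H\otimes H$, transfer the moment and NED conditions via $\|u\otimes v\|_{\operatorname{HS}}=\|u\|_H\|v\|_H$ (the paper's Lemmas \ref{lemMoment} and \ref{lemNED}), and dispose of the remainder --- the $\bar X^\star\otimes\bar X^\star-\bar X_n\otimes\bar X_n$ term and the $kp<n$ discrepancy --- using the bootstrap CLT for the functional mean together with second-moment bounds. The only deviations are cosmetic: a slightly different algebraic splitting of the mean-type term and a small imprecision in the stated order of the cross terms (they are $o_{P^\star}(n^{-1/2})$ because one factor is $O_{P^\star}(n^{-1/2})$ and the other $o_{P^\star}(1)$, which is exactly how the paper argues).
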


This theorem justifies to use the empirical quantiles obtained by the Monte Carlo method as critical values for testing hypothesis regarding the empirical covariance operator. Of course, the bootstrap could also be used to obtain confidence balls for the covariance operator. Let us note that Theorem 2.1 of Pilavakis, Paparoditis and Sapatinas \cite{pilavakis2019} gives a similar result for the moving block bootstrap, with stronger dependence condition (4-approximability instead of $L_2$-NED), but weaker assumptions on the rate of approximation constants. The proof of our Theorem shows that the consistency for the empirical covariance operator follows from the consistency of the bootstrap for partial sums of $(Y_n)_{n\in\N}$ and $(X_n)_{n\in\N}$. So it should be possible to extend other bootstrap methods in Hilbert spaces in the same way to empirical covariance operators: the stationary bootstrap studied by Politis and Romano \cite{politis1994}, the dependent wild bootstrap (see Bucchia, Wendler \cite{bucchia2017}), the functional AR-sieve bootstrap proposed by Paparoditis \cite{paparoditis2018}, or the bootstrap based on functional principal components introduced by Shang \cite{shang2018}.

Of course the choice of the block length $p$ plays an important role in practice. For functional time series, the optimal bandwidth for the kernel estimation of the long-run covariance operator has been studied by Berkes, Horv\'ath and Rice \cite{berkes2016}, and a data adaptive method to choose the bandwidth has been proposed by Rice and Shang \cite{rice2017}. As the implicit long-run covariance estimator in the bootstrap procedure is similar to the kernel estimator with Bartlett kernel, we will use their method in our simulation study.

To obtain critical values for the CUSUM statistic, we need a bootstrap analogue of Theorem \ref{limitseq}. We define the bootstrap version $W_n^\star$ of the sequential empirical covariance operator by
\begin{equation*}
W_n^\star(t):=\frac{1}{\sqrt{pk}}\sum_{i=1}^{[pkt]}\Big((X_i^\star-\bar{X}^\star)\otimes (X_i^\star-\bar{X}^\star)-\hat{V}_n\Big)
\end{equation*}
for $t\in[0,1]$. We get the following result on process convergence:

\begin{theo}\label{bootseq} Let $(X_n)_{n\in\Z}$ be a stationary sequence and let it be $L_2$-NED with approximating constants $(a_m)_{m\in\N}$ on an absolutely regular process with mixing coefficients $(\beta_m)_{m\in\N}$, such that for some $\delta>0$
\begin{enumerate}
\item $E\big\|X_i\big\|_{H}^{8+\delta}<\infty$,
\item $\sum_{m=1}^\infty m^2 a_{m}^{\delta/(12+2\delta)}<\infty$,
\item $\sum_{m=1}^\infty m^2\beta_m^{\delta/(8+\delta)}<\infty$.
\end{enumerate}
If Assumption \ref{assublock} holds for the block length, we have the weak convergence
\begin{equation}
W_n^\star\Rightarrow^\star W,
\end{equation}
conditional on $(X_n)_{n\in Z}$ in probability, where $W$ is a Brownian motion with values in  $H\otimes H$, such that the increments $W(t)-W(s)$ have the covariance operator $|t-s|C_\infty$.
\end{theo}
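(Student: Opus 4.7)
My plan is to reduce Theorem \ref{bootseq} to a conditional sequential invariance principle in $D_{H\otimes H}[0,1]$ for the block bootstrap of the $H\otimes H$-valued sequence $(Y_n)$, where $Y_n := (X_n - \mu) \otimes (X_n - \mu)$ and $\mu := EX_0$. Setting $Y_i^\star := (X_i^\star - \mu) \otimes (X_i^\star - \mu)$ (which coincides with drawing blocks directly from $(Y_n)$ since $\mu$ is deterministic), the identity
\begin{equation*}
(X_i^\star - \bar X^\star) \otimes (X_i^\star - \bar X^\star) = Y_i^\star - (X_i^\star-\mu) \otimes (\bar X^\star-\mu) - (\bar X^\star-\mu) \otimes (X_i^\star-\mu) + (\bar X^\star-\mu)\otimes(\bar X^\star-\mu),
\end{equation*}
combined with $\hat V_n - \frac{1}{n}\sum_{i=1}^n Y_i = -(\bar X - \mu) \otimes (\bar X-\mu)$, produces the decomposition
\begin{equation*}
W_n^\star(t) = \frac{1}{\sqrt{pk}}\sum_{i=1}^{[pkt]}\bigl(Y_i^\star - E^\star Y_i^\star\bigr) + R_n^\star(t),
\end{equation*}
where $E^\star Y_i^\star = \frac{1}{n}\sum_{i=1}^n Y_i$ and $R_n^\star(t)$ collects the three cross- and quadratic-mean corrections together with the discrepancy $\hat V_n - E^\star Y_i^\star$.

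First I would show $\sup_{t\in[0,1]} \|R_n^\star(t)\|_{\operatorname{HS}} = o_{P^\star}(1)$ in probability. The cross terms are bounded in sup-norm by $\frac{2}{\sqrt{pk}}\sup_{t\le 1}\bigl\|\sum_{i=1}^{[pkt]}(X_i^\star-\mu)\bigr\|_H \cdot \|\bar X^\star-\mu\|_H$; the bootstrap invariance principle for the partial sums of $X_n^\star$ (which holds under our assumptions, since they strictly dominate what is needed for the first-order problem as in \cite{sharipov2016}) yields orders $O_{P^\star}(\sqrt{pk})$ and $O_{P^\star}(1/\sqrt{pk})$ respectively, so the cross terms are $o_{P^\star}(1)$. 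The quadratic-mean remainder is of the same order, and the deterministic correction contributes $\frac{[pkt]}{\sqrt{pk}}\|\bar X-\mu\|_H^2 = O_P(\sqrt{pk}/n) = o_P(1)$ by Assumption \ref{assublock}. The heart of the proof is then to establish
\begin{equation*}
t \mapsto \frac{1}{\sqrt{pk}}\sum_{i=1}^{[pkt]}\bigl(Y_i^\star - E^\star Y_i^\star\bigr) \Rightarrow^\star W,
\end{equation*}
which I would obtain by applying a sequential block-bootstrap invariance principle in a separable Hilbert space to $(Y_n)$. Its hypotheses are inherited from those on $(X_n)$: $E\|Y_0\|_{\operatorname{HS}}^{4+\delta/2} < \infty$ follows from $E\|X_0\|_H^{8+\delta} < \infty$, while the $L_2$-NED rate for $(Y_n)$ is obtained by decomposing
\begin{equation*}
Y_n - \tilde Y_n^{(m)} = (X_n-\mu)\otimes (X_n - \tilde X_n^{(m)}) + (X_n - \tilde X_n^{(m)})\otimes (\tilde X_n^{(m)}-\mu),
\end{equation*}
applying Cauchy--Schwarz, and interpolating the rate $a_m^{\delta/(12+2\delta)}$ for $(X_n)$ into the rate needed for the tensor-square process; this is the same interpolation step that underlies Theorem \ref{limitseq} and produces the exponent $\delta/(12+2\delta)$.

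The main technical difficulty is the tightness of the conditional partial-sum process in the Skorokhod topology on $D_{H\otimes H}[0,1]$: controlling the modulus of continuity demands a fourth-moment bound on blocked increments of $Y_i^\star - E^\star Y_i^\star$, which is precisely why the stronger conditions $E\|X_0\|_H^{8+\delta}<\infty$ and the weighted summabilities $\sum m^2 a_m^{\delta/(12+2\delta)} < \infty$ and $\sum m^2 \beta_m^{\delta/(8+\delta)} < \infty$ are needed, whereas Theorem \ref{bootfixed} required only second-moment-level control on $Y$. The conditional convergence of finite-dimensional distributions to those of $W$ then follows from a Cramér--Wold reduction to Theorem \ref{bootfixed} applied to single $t$'s, together with asymptotic independence of increments over disjoint time intervals, which is guaranteed by the block-bootstrap construction with $p_n/n\to 0$ and by the representation \eqref{LRC} of the limiting covariance operator.
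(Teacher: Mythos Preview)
Your strategy is essentially the paper's own: decompose $W_n^\star$ into a bootstrap partial-sum process for $Y_i^\star=(X_i^\star-\mu)\otimes(X_i^\star-\mu)$ plus mean-correction remainders, kill the remainders using the first-order bootstrap results for $(X_n)$, and then invoke a sequential block-bootstrap invariance principle in the Hilbert space $H\otimes H$ for $(Y_n)$, whose hypotheses follow from Lemma~\ref{lemMoment} and Lemma~\ref{lemNED}. The paper does exactly this, except that it quotes Theorem~2 of \cite{sharipov2016} as a black box for the main term rather than re-arguing tightness and finite-dimensional convergence.

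One correction is needed. Your formula $E^\star Y_i^\star=\frac{1}{n}\sum_{i=1}^n Y_i$ is not right: the bootstrap draws blocks from $X_1,\ldots,X_{kp}$, so $E^\star Y_i^\star=\frac{1}{kp}\sum_{i=1}^{kp}Y_i=\overline{X\otimes X}_{kp}$ (with $\mu=0$). Consequently the ``discrepancy'' $\hat V_n-E^\star Y_i^\star$ splits as $(\hat V_n-\overline{X\otimes X}_n)+(\overline{X\otimes X}_n-\overline{X\otimes X}_{kp})$. You treated only the first piece (the $\|\bar X-\mu\|_H^2$ term); the second piece, of size $\sqrt{pk}\,\|\overline{X\otimes X}_n-\overline{X\otimes X}_{kp}\|_{\operatorname{HS}}$, also has to be shown $o_P(1)$. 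This is exactly the step the paper singles out, handling it via the variance bound $E\|\sum_{i=l+1}^m Y_i\|_{\operatorname{HS}}^2\le C(m-l)$ and $n-kp<p=o(n)$. With that term added, your remainder analysis is complete and matches the paper's.
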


\section{Simulation results}\label{sec3}

\subsection{Cross covariance operator} We test the hypothesis that two functional time series $(X_n)_{n\in\Z}$ and $(Y_n)_{n\in\Z}$ are uncorrelated, that means that the cross covariance  operator satisfies $V_{XY}=0$. We are following the model of Rice and Shum \cite{rice2017}: Both time series take values in the Hilbert space $L^2[0,1]$ of square integrable functions with inner product $\langle x,y \rangle_H=\int x(s)y(s)ds$. For practical reasons, we use a finite equidistant grid of 100 points on $[0,1]$ to calculate integrals. The strength of correlation is modeled by a parameter $\alpha\in[0,1]$ and we have
\begin{align*}
X_i&:=\alpha\varepsilon_{c,i}+(1-\alpha)\varepsilon_{X,i},\\
Y_i&:=\alpha\varepsilon_{c,i}+(1-\alpha)\varepsilon_{Y,i},\\
\end{align*}
where $(\varepsilon_{c,i})_{i\in\Z}$, $(\varepsilon_{X,i})_{i\in\Z}$, $(\varepsilon_{Y,i})_{i\in\Z}$ are three independent time series. For $\alpha=0$, the two time series $(X_n)_{n\in\Z}$, $(Y_n)_{n\in\Z}$ are independent, while for $\alpha\neq 0$, the cross covariance operator satisfies $V_{XY}=\alpha^2 V_{\varepsilon_c}$. For the time series we consider two models: Let $W_{X,i}$, $W_{Y,i}$, $W_{c,i}$, $i\in\Z$ be independent standard Brownian motion and either
\begin{itemize}
\item $\varepsilon_{X,i}=W_{X,i}$, $\varepsilon_{Y,i}=W_{Y,i}$, $\varepsilon_{c,i}=W_{c,i}$ (IID case)
\item $\varepsilon_{X,i}(t)=\int\Phi(s,t)\varepsilon_{X,i-1}(s)ds+W_{X,i}(t)$, $\varepsilon_{Y,i}(t)=\int\Phi(s,t)\varepsilon_{Y,i-1}(s)ds+W_{Y,i}(t)$, $\varepsilon_{c,i}(t)=\int\Phi(s,t)\varepsilon_{c,i-1}(s)ds+W_{c,i}(t)$, where $\Phi(s,t)=\min\{s,t\}$ (FAR(1) case)
\end{itemize}
As a test statistic, we use the squared Hilbert-Schmidt norm of the empirical cross covariance operator
\begin{equation*}
S_n:=\int_0^1\! \int_0^1 \bigg(\frac{1}{n}\sum_{i=1}^n\left(X_i(s)-\bar{X}(s)\right)\left(Y_i(t)-\bar{Y}(t)\right)\bigg)^2dsdt
\end{equation*}
and use the block bootstrap version to calculate critical values. This test statistic is obviously a continuous functional of the empirical covariance operator of the joint time series $(X_n,Y_n)_{n\in\N}$, so we can apply Theorem \ref{bootfixed} and conclude that the bootstrap is consistent.

In the simulation study, we study the finite sample behavior for sample sizes $n=50$ or $n=100$. The results can be found in Table \ref{tab1} for $n=50$ and in Table \ref{tab2} for $n=100$. They are based on 3000 simulation runs. For each of the 3000 samples, we generate 1000 bootstrap samples to calculate the critical values. The empirical rejection frequencies are shown for theoretical sizes 1\%, 5\% and 10\% of the test. We have used fixed block lengths as well as data adaptive block lengths using the bandwidth selection method by Rice and Shang \cite{rice2017b}. We used an initial bandwidth $h=n^{1/5}$ and the Bartlett weight function, and we have rounded the resulting bandwidth up to the next integer to obtain a valid block length. Shorter block lengths lead to better sizes, but also the data adaptive choice of the block length works quite well. For $n=50$, the tests are oversized, but this problem is less pronounced for $n=100$.

In Figure \ref{fig1} we make a comparison between the methods propsed by  Rice and Shum \cite{rice2017} with and without dimension reduction (values obtained from Table 1 and Figure 2 of \cite{rice2017}), and with the moving block bootstrap used by Pilavakis, Paparoditis and Sapatinas \cite{pilavakis2019}. There is not much difference between the two bootstrap methods, but both have a somewhat better performance than the tests based on asymptotical critical values.

\begin{figure}
\includegraphics[width=10cm]{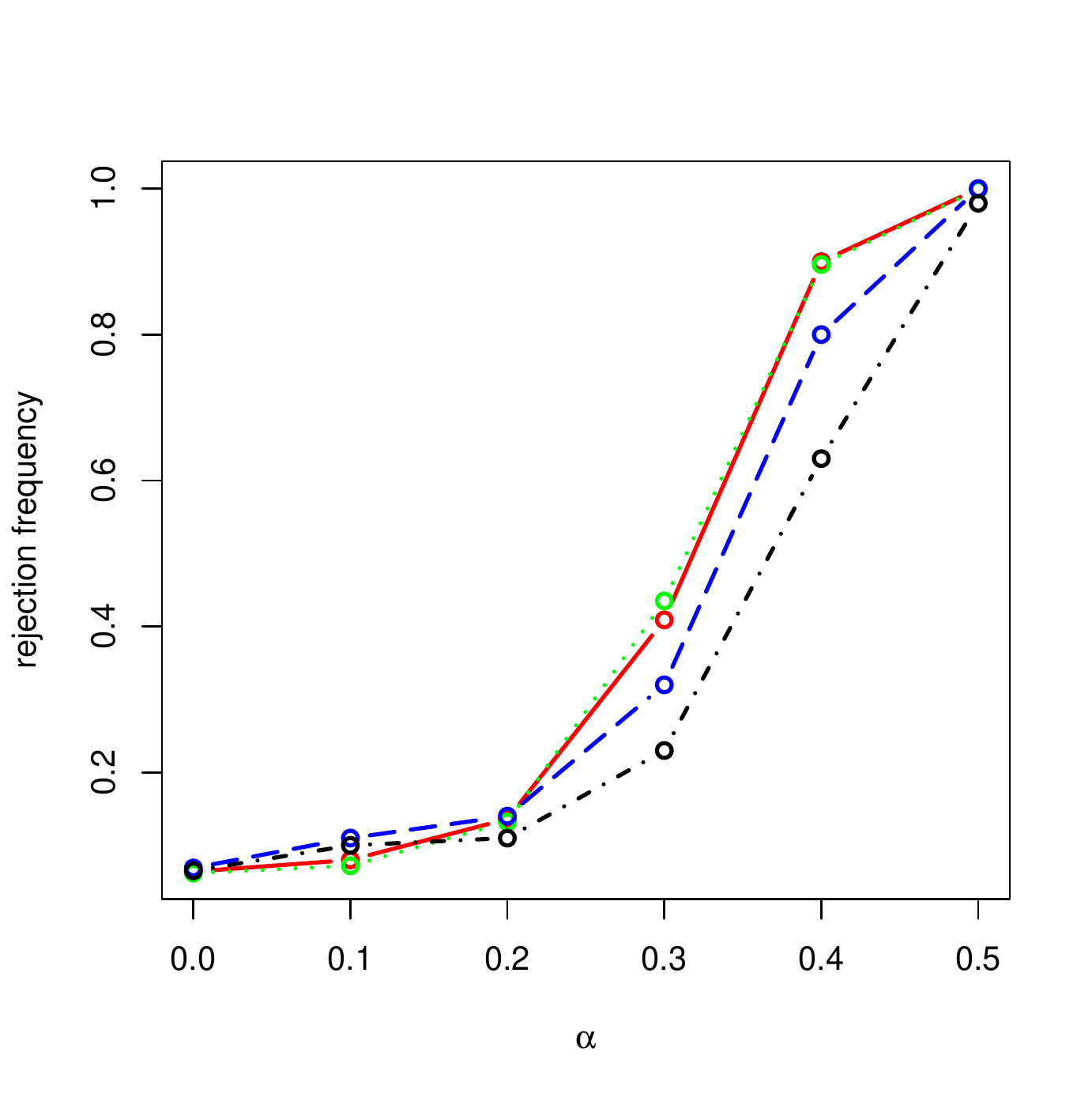}
\caption{Empirical size and power depending on $\alpha$ for the test based on $S_n$ and nonoverlapping block bootstrao (red solid), moving block bootstrap (green dotted), and for the two tests proposed by Rice and Shum \cite{rice2017}: test based on $F_n$ (blue dashed) and $F_{n,3}$ (black point dashed) for a FAR(1) time series of length $n=100$, theoretal size 5\%.}\label{fig1}
\end{figure}

\begin{table}
\caption{Empirical rejection frequencies for sample size $n=50$ under the hypothesis of no cross correlation ($\alpha=0$) and under the alternative ($\alpha>0$) for different block lengths $p$ and theoretical sizes 1\%, 5\%, 10\%.}
\begin{tabular}{cc|ccc|ccc|}\label{tab1}
 &  &  & IID &  &  & FAR(1) &  \\
 & \phantom{MMMM} & 1\% & 5\% & 10\% &1\% & 5\% & 10\% \\ 
\hline   &  & \phantom{MMMM} & \phantom{MMMM} & \phantom{MMMM} \vspace{-0.2cm}& \phantom{MMMM} & \phantom{MMMM} & \phantom{MMMM} \\ 
 $\alpha=0$ & $p=2$ & 0.015 & 0.061 & 0.123 & 0.014 & 0.075 & 0.132 \\ 
 & $p=3$ &  0.018 & 0.065 & 0.117 & 0.018 & 0.064 &  0.120 \\ 
 & $p=5$ & 0.026 & 0.087 & 0.148 & 0.033 & 0.098 & 0.161 \\ 
 & adaptive & 0.017 & 0.066 &  0.120 & 0.017 & 0.073 & 0.130 \\ 
\vspace{-0.2cm} &  &  &  &  &  &  &  \\ 
$\alpha=0.1$ &$ p=2$ & 0.018 & 0.074 & 0.137 & 0.017 & 0.067 & 0.126 \\ 
 & $p=3$ & 0.014  & 0.066 &  0.111 &  0.016 & 0.066 & 0.123 \\ 
 & $p=5$ & 0.037 & 0.104 & 0.166 &  0.035 & 0.109 & 0.173 \\ 
 & adaptive & 0.015 & 0.067 & 0.125  &   0.020 & 0.072 & 0.139  \\ 
\vspace{-0.2cm} &  &  &  &  &  &  &  \\ 
$\alpha=0.2$ & $p=2 $ & 0.019 & 0.082 & 0.149 & 0.028 & 0.101 & 0.163 \\ 
 & $p=3$ & 0.026 & 0.083 &  0.140 & 0.025 & 0.089 & 0.156
 \\ 
 & $p=5$ & 0.035 & 0.106 & 0.174 & 0.045 & 0.122 & 0.193 \\ 
 & adaptive & 0.025 & 0.086 & 0.144 &  0.030 & 0.105 &  0.180 \\ 
\vspace{-0.2cm} &  &  &  &  &  &  &  \\ 
$\alpha=0.3$ & $p=2$ & 0.072 & 0.212 & 0.318 &  0.087 & 0.226  & 0.340 \\ 
 & $p=3$ & 0.076 & 0.205 & 0.299 & 0.078 & 0.206 & 0.311\\ 
 & $p=5$ & 0.107 & 0.254 & 0.362 & 0.122 & 0.263 & 0.359 \\ 
 & adaptive &0.068 &  0.210 & 0.323 &   0.091 & 0.224 & 0.333  \\ 
\vspace{-0.2cm} &  &  &  &  &  &  &  \\ 
$\alpha=0.4$ & $p=2$ & 0.332 & 0.606 &  0.730 & 0.343 & 0.633 & 0.750 \\ 
 & $p=3$ & 0.309 & 0.578 & 0.698 & 0.342 &  0.614 &  0.730 \\ 
 & $p=5$  & 0.390 &  0.634 & 0.744 & 0.424 & 0.654 & 0.760 \\ 
 & adaptive &0.336  &  0.600 & 0.722 &   0.358 & 0.617 & 0.739  \\ 
\vspace{-0.2cm} &  &  &  &  &  &  &  \\ 
$\alpha=0.5$ & $p=2$ & 0.840 & 0.965 & 0.986 & 0.852 & 0.972  & 0.992 \\ 
 & $p=3$ & 0.788 & 0.953 & 0.981 & 0.819 &  0.960 & 0.983 \\ 
 & $p=5$ &  0.851 & 0.962 & 0.985 & 0.851 & 0.969 & 0.988 \\ 
 & adaptive &  0.825 & 0.958 & 0.983 &   0.840 & 0.962 & 0.985 \\ 
\end{tabular} 
\end{table}

\begin{table}
\caption{Empirical rejection frequencies for sample size $n=100$ under the hypothesis of no cross correlation ($\alpha=0$) and under the alternative ($\alpha>0$) for different block lengths $p$ and theoretical sizes 1\%, 5\%, 10\%.}
\begin{tabular}{cc|ccc|ccc|}\label{tab2}
 &  &  & IID &  &  & FAR(1) &  \\
 & \phantom{MMMM} & 1\% & 5\% & 10\% &1\% & 5\% & 10\% \\ 
\hline   &  & \phantom{MMMM} & \phantom{MMMM} & \phantom{MMMM} \vspace{-0.2cm}& \phantom{MMMM} & \phantom{MMMM} & \phantom{MMMM} \\ 
 $\alpha=0$ & $p=3$ & 0.013 & 0.061 & 0.116  &  0.018 & 0.066 & 0.121 \\ 
 & $p=5$ &  0.023 & 0.069 & 0.124  & 0.026 &  0.090 & 0.148 \\ 
 & $p=7$ & 0.021 & 0.072 & 0.128  &  0.022 & 0.076 & 0.136 \\ 
 & adaptive & 0.014 & 0.055 & 0.111 &  0.018 & 0.064 & 0.123 \\ 
\vspace{-0.2cm} &  &  &  &  &  &  &  \\ 
$\alpha=0.1$ & $p=3$ & 0.015 & 0.057 &  0.110  &  0.024 & 0.084   & 0.140 \\ 
 & $p=5$ & 0.027 & 0.077 & 0.137  &  0.024 & 0.086 & 0.147 \\ 
 & $p=7$ & 0.020 &  0.073 &  0.129  &  0.026 & 0.083 & 0.143 \\ 
 & adaptive & 0.010 & 0.056 & 0.114 &  0.020 &  0.080  & 0.145 \\ 
\vspace{-0.2cm} &  &  &  &  &  &  &  \\ 
$\alpha=0.2$ & $p=3$ &  0.029 & 0.101 & 0.175 & 0.047 & 0.128  & 0.210 \\ 
 & $p=5$ & 0.039 & 0.108  & 0.180  &  0.060 &  0.152 &  0.230
 \\ 
 & $p=7$ & 0.042 &  0.120 &  0.200  &  0.055 & 0.146 & 0.228 \\ 
 & adaptive & 0.028  & 0.110 & 0.179 &  0.052 & 0.136 & 0.206 \\ 
\vspace{-0.2cm} &  &  &  &  &  &  &  \\ 
$\alpha=0.3$ & $p=3$ & 0.152 &  0.350 & 0.485  & 0.207 & 0.411 
& 0.527 \\ 
 & $p=5$ & 0.178 & 0.369 & 0.488  &  0.242 &  0.440 & 0.542\\ 
 & $p=7$ &  0.178  & 0.366  & 0.480 &  0.223 & 0.411 & 0.534\\ 
 & adaptive & 0.154  & 0.350 & 0.483 & 0.206 & 0.409 & 0.532 \\ 
\vspace{-0.2cm} &  &  &  &  &  &  &  \\ 
$\alpha=0.4$ & $p=3$ & 0.691 & 0.883 & 0.939  &  0.739 & 0.895 & 0.945 \\ 
 &$ p=5$ &  0.704 & 0.886 & 0.942 &  0.749 & 0.908 & 0.954 \\ 
 & $p=7$  & 0.695 & 0.873  & 0.930 &  0.734 & 0.896 & 0.943 \\ 
 & adaptive & 0.708 & 0.889 & 0.944 &  0.726   & 0.900 & 0.944 \\ 
\vspace{-0.2cm} &  &  &  &  &  &  &  \\ 
$\alpha=0.5$ & $p=3$ & 0.994 & 0.999 &  1  & 0.996  &  1  &  1 \\ 
 & $p=5$ & 0.990 &  0.999 &  1  &  0.995 & 0.999 & 1 \\ 
 & $p=7$ &  0.992 & 0.999 & 1  &  0.993 & 0.999   & 1 \\ 
 & adaptive & 0.995  & 1  & 1 &  0.996  &   1 &  1 \\ 
\end{tabular} 
\end{table}

\subsection{Change of covariance operator} We study a time series $(X_n)_{n\in\N}$ in $L^2[0,1]$ following the model
\begin{equation*}
X_i(t)=\begin{cases}\varepsilon_{X,i}(t)\ \ &\text{ for }i<k^\star\\
\varepsilon_{X,i}(t)\left(1+d_1+d_2\big(1+\sin(2\pi t)\big)\right)\ \ &\text{ for }i\geq k^\star,\end{cases}
\end{equation*}
where $(\epsilon_{X,n})_{n\in\Z}$ follows the IID or the FAR(1) model of the previous section. We are interested in testing the hypothesis $d_1=d_2=0$ (stationarity) against the alternative $k^\star\in\{2,\ldots,n\}$ and $(d_1,d_2)\neq 0$. This alternative means that there is a changepoint in the covariance operator. If $d_1\neq 0$ and $d_2=0$, the variance changes uniformly, while for $d_2\neq 0$, we have a nonuniform change of variance. In our simulation, we consider a change in the middle of the data ($k^\star=51$).

As test statistics, we use the supremum type CUSUM statistic $C\!S_n$ and integral type CUSUM statistic $C\! I_n$ given by
\begin{align*}
C\!S_n&:=\sup_{t\in[0,1]}\left\|W_n(t)-tW_n(1)\right\|_{\operatorname{HS}}, \\
C\! I_n&:=\int_{0}^1 \left\|W_n(t)-tW_n(1)\right\|_{\operatorname{HS}}^2dt.
\end{align*}
The two statistics are obviously continuous functionals of the $W_n$ and thus, we can apply Theorems \ref{limitseq} and \ref{bootseq} to obtain bootstrap consistency, meaning that the bootstrap versions
\begin{align*}
C\!S_n^\star&:=\sup_{t\in[o,1]}\left\|W_n^\star(t)-tW_n^\star(1)\right\|_{\operatorname{HS}}, \\
C\! I_n^\star&:=\int_{0}^1 \left\|W_n^\star(t)-tW_n^\star(1)\right\|_{\operatorname{HS}}^2dt.
\end{align*}
will converge weakly to the same limit distribution as the statistics $C\!S_n$ and  $C\! I_n$.

In Table \ref{tab3}, the results for the IID case $\varepsilon_{X,i}=W_{X,i}$ can be found, Table \ref{tab4} contains the results for the FAR(1) case $\varepsilon_{X,i}(t)=\int\Phi(s,t)\varepsilon_{X,i-1}(s)ds+W_{X,i}(t)$. The empirical rejection frequencies are shown for theoretical sizes 1\%, 5\% and 10\% and are based on 3000 simulation runs. For each sample, we create 1000 bootstrap samples to calculate critical values. Both tests hold the size under the hypothesis quite well for the different block lengths. The data-adaptive method by Rice and Shang \cite{rice2017b} leads to a good combination of size and power.

\begin{table}
\caption{Empirical rejection frequencies for $n=100$ and IID model under the hypothesis ($d_1=d_2=0$) and under the alternative of a changepoint at $k^\star=51$ ($d_1\neq 0$ or $d_2\neq 0$) for theoretical sizes 1\%, 5\%, 10\%.}
\begin{tabular}{cc|ccc|ccc|}\label{tab3}
 &  &  & $C\!S_n$ &  &  & $C\!I_n$ &  \\
 & \phantom{MMMM} & 1\% & 5\% & 10\% &1\% & 5\% & 10\% \\ 
\hline   &  & \phantom{MMMM} & \phantom{MMMM} & \phantom{MMMM} \vspace{-0.2cm}& \phantom{MMMM} & \phantom{MMMM} & \phantom{MMMM} \\ 
 $d_1=0,$ & $p=3$ & 0.004 & 0.044 & 0.100 & 0.006 & 0.047 & 0.108 \\ 
$d_2=0$ & $p=5$ & 0.004 & 0.042 & 0.097  & 0.007 & 0.054 & 0.113 \\ 
 & $p=7$ & 0.008 & 0.050 & 0.105  &  0.009 & 0.054 & 0.118 \\ 
 & adaptive & 0.005 & 0.037 & 0.087 &   0.006 & 0.044 & 0.094 \\ 
\vspace{-0.2cm} &  &  &  &  &  &  &  \\ 
$d_1=0.4,$ & $p=3$ & 0.161 & 0.468 & 0.632  &  0.198 & 0.501 & 0.654 \\ 
$d_2=0$ & $p=5$ & 0.114 & 0.406 & 0.588  &  0.149 & 0.453 & 0.609 \\ 
 & $p=7$ & 0.122 & 0.411 & 0.604  &  0.145 & 0.441 & 0.620 \\ 
 & adaptive & 0.151 & 0.448 & 0.620 & 0.183 & 0.474 & 0.629 \\ 
\vspace{-0.2cm} &  & & &  &  &  &  \\ 
$d_1=0.8,$ & $p=3$ & 0.560 & 0.880 & 0.960 &  0.588 & 0.884 & 0.955 \\ 
$d_2=0$ & $p=5$ &  0.404 & 0.819 & 0.933 & 0.459 & 0.835 & 0.931 \\ 
 & $p=7$ & 0.384 & 0.798 & 0.926  &  0.423 & 0.810 & 0.922 \\ 
 & adaptive &0.586 & 0.896 & 0.962 &  0.619 & 0.889 & 0.956  \\ 
\vspace{-0.2cm} &  &  &  &  &  &  &  \\ 
$d_1=0,$ & $p=3$ &0.098 & 0.330 & 0.494 &  0.125 & 0.386 & 0.554 \\ 
$d_2=0.4$ & $p=5$ & 0.072 & 0.284 & 0.459  &  0.100 & 0.346 & 0.534\\ 
 & $p=7$ &   0.084 & 0.301 & 0.474  & 0.107 & 0.356 & 0.532\\ 
 & adaptive &  0.086 & 0.305 & 0.458 &   0.110 & 0.359 & 0.534 \\ 
\vspace{-0.2cm} &  &  &  &  &  &  &  \\ 
$d_1=0,$ &$ p=3$ & 0.458 & 0.829 & 0.938  &  0.503 & 0.858 & 0.946 \\ 
$d_2=0.8$ & $p=5$ &  0.363 & 0.782 & 0.915  &  0.422 & 0.819 & 0.929 \\ 
 & $p=7$  & 0.334 & 0.774 & 0.912  &  0.396 & 0.797 & 0.930\\ 
 & adaptive & 0.501 & 0.852 & 0.946 &  0.540 & 0.880 & 0.954  \\ 
\vspace{-0.2cm} &  &  &  &  &  &  &  \\ 
$d_1=0.4,$ &$ p=3$ & 0.442 & 0.811 & 0.916  & 0.491&  0.822 & 0.921\\ 
$d_2=0.4$ & $p=5$ & 0.321&  0.753 & 0.894  &  0.374 & 0.768 & 0.903 \\ 
 & $p=7$ &  0.303 & 0.727 &  0.880  &  0.353 & 0.746  & 0.880\\ 
 & adaptive & 0.442 & 0.813 & 0.921  & 0.477 & 0.820 & 0.925 \\ 
\end{tabular} 
\end{table}

\begin{table}
\caption{Empirical rejection frequencies for $n=100$ and FAR(1) model under the hypothesis ($d_1=d_2=0$) and under the alternative of a changepoint at $k^\star=51$ ($d_1\neq 0$ or $d_2\neq 0$) for theoretical sizes 1\%, 5\%, 10\%.}
\begin{tabular}{cc|ccc|ccc|}\label{tab4}
 &  &  & $C\!S_n$ &  &  & $C\!I_n$ &  \\
 & \phantom{MMMM} & 1\% & 5\% & 10\% &1\% & 5\% & 10\% \\ 
\hline   &  & \phantom{MMMM} & \phantom{MMMM} & \phantom{MMMM} \vspace{-0.2cm}& \phantom{MMMM} & \phantom{MMMM} & \phantom{MMMM} \\ 
 $d_1=0,$ & $p=3$ & 0.011 & 0.055 & 0.116  & 0.012 & 0.058
 & 0.121 \\ 
$d_2=0$ & $p=5$ &  0.006 & 0.044 & 0.111  &  0.006 & 0.059 & 0.124\\ 
 & $p=7$ &  0.010 & 0.064 & 0.130  &  0.015 & 0.068 & 0.132 \\ 
 & adaptive & 0.006 & 0.044 & 0.097 & 0.008 & 0.052 & 0.110 \\ 
\vspace{-0.2cm} &   &  &   &  &  &  &  \\ 
$d_1=0.4,$ & $p=3$ & 0.196 & 0.496 & 0.660  &  0.231 & 0.529 & 0.679 \\ 
$d_2=0$ & $p=5$ & 0.146 & 0.436 & 0.605  &  0.180 & 0.456 & 0.626 \\ 
 & $p=7$ & 0.148 & 0.437 & 0.631  &  0.180 & 0.480 & 0.641\\ 
& adaptive &0.179 & 0.470 & 0.645 & 0.218 & 0.503 & 0.665  \\ 
\vspace{-0.2cm} &  &  &  &  &  &  &  \\ 
$d_1=0.8,$ & $p=3$ & 0.550 & 0.878 & 0.954  &  0.581 & 0.876 & 0.950 \\ 
$d_2=0$ & $p=5$ & 0.414 & 0.837 & 0.941  & 0.459 & 0.842 & 0.936
 \\ 
 & $p=7$ & 0.416 & 0.814 & 0.934  &  0.456 & 0.820 & 0.929 \\ 
 & adaptive & 0.588 & 0.887 & 0.957  &  0.603 & 0.892 & 0.950 \\ 
\vspace{-0.2cm} &  &  &  &  &  &  &  \\ 
$d_1=0,$ & $p=3$ & 0.109 & 0.330 & 0.492  &  0.139 & 0.378 & 0.547 \\ 
$d_2=0.4$ & $p=5$ & 0.076 & 0.309 & 0.480  & 0.104 & 0.360 & 0.536\\ 
 & $p=7$ & 0.094 & 0.331 & 0.503  &  0.124 & 0.372 & 0.566\\ 
 & adaptive &  0.114 & 0.359 & 0.506 &   0.142 & 0.403 & 0.560  \\ 
\vspace{-0.2cm} &  &  &  &  &  &  &  \\ 
$d_1=0,$ &$ p=3$ & 0.495 & 0.845 & 0.944  & 0.523 & 0.860 & 0.946 \\ 
$d_2=0.8$ & $p=5$ & 0.361 & 0.777 & 0.919  &  0.417 & 0.799 & 0.928 \\ 
 & $p=7$  & 0.350 & 0.776 & 0.916  & 0.413 & 0.798 & 0.921\\ 
 & adaptive &  0.476 & 0.848 & 0.945  &   0.536 & 0.867 & 0.948  \\ 
\vspace{-0.2cm} &  &  &  &  &  &  &  \\ 
$d_1=0.4,$ &$ p=3$ & 0.461 & 0.821 & 0.919  &  0.507 & 0.824 & 0.921\\ 
$d_2=0.4$ & $p=5$ & 0.349 & 0.756 & 0.904  &  0.402 & 0.774 & 0.896 \\ 
 & $p=7$ &  0.324 & 0.746 & 0.889  &  0.386 & 0.765 & 0.885\\ 
 & adaptive & 0.467 & 0.826 & 0.927 &  0.513 & 0.837 & 0.924 \\ 
\end{tabular} 
\end{table}

\pagebreak

\section{Proofs}\label{sec4}

\subsection{Auxilary Results}

\begin{lem}\label{lemMoment} If $X_i$ is a random variable in $H$ and $p\geq 2$ such that $E\left\|X_i\right\|_H^p<\infty$, then
\begin{equation*}
E\left\|(X_i-EX_i)\otimes(X_i-EX_i)\right\|_{\operatorname{HS}}^{p/2}<\infty.
\end{equation*}
\end{lem}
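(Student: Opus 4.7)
The plan is to reduce the statement to a moment bound on $\|X_i - EX_i\|_H$ via the elementary identity that identifies the Hilbert--Schmidt norm of a rank-one tensor with the product of the norms of its factors.

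First I would recall that for $a,b\in H$, the tensor product $a\otimes b$ acts on $h\in H$ by $(a\otimes b)(h)=\langle b,h\rangle_H\, a$, and a direct computation in any orthonormal basis $(e_i)_{i\in\N}$ of $H$ gives
\begin{equation*}
\|a\otimes b\|_{\operatorname{HS}}^2=\sum_{i=1}^\infty\|\langle b,e_i\rangle_H a\|_H^2=\|a\|_H^2\sum_{i=1}^\infty\langle b,e_i\rangle_H^2=\|a\|_H^2\|b\|_H^2,
\end{equation*}
so $\|a\otimes b\|_{\operatorname{HS}}=\|a\|_H\|b\|_H$. Applying this with $a=b=X_i-EX_i$ (which is well defined in the Bochner sense since $E\|X_i\|_H<\infty$), I obtain
\begin{equation*}
\|(X_i-EX_i)\otimes(X_i-EX_i)\|_{\operatorname{HS}}=\|X_i-EX_i\|_H^2.
\end{equation*}

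Raising to the power $p/2$ and taking expectations reduces the claim to showing $E\|X_i-EX_i\|_H^p<\infty$. By the triangle inequality in $H$, $\|X_i-EX_i\|_H\leq \|X_i\|_H+\|EX_i\|_H$, and by Jensen's inequality applied to the convex functional $\|\cdot\|_H$, we have $\|EX_i\|_H\leq E\|X_i\|_H$. Combining these with the elementary inequality $(u+v)^p\leq 2^{p-1}(u^p+v^p)$ and a second application of Jensen (now to $x\mapsto x^p$, $p\geq 1$) yields
\begin{equation*}
E\|X_i-EX_i\|_H^p\leq 2^{p-1}\bigl(E\|X_i\|_H^p+(E\|X_i\|_H)^p\bigr)\leq 2^p\, E\|X_i\|_H^p<\infty,
\end{equation*}
which finishes the proof.

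There is no serious obstacle here; the only point requiring a moment of care is the verification of the identity $\|a\otimes b\|_{\operatorname{HS}}=\|a\|_H\|b\|_H$, since the Hilbert--Schmidt inner product on $H\otimes H$ in the paper is defined via an orthonormal basis and one must check that the resulting norm is basis independent (this is standard and follows from Parseval). Everything else is the triangle inequality and Jensen.
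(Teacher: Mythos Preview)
Your proof is correct and follows essentially the same route as the paper: both establish the identity $\|(X_i-EX_i)\otimes(X_i-EX_i)\|_{\operatorname{HS}}=\|X_i-EX_i\|_H^2$ via Parseval in an orthonormal basis and then bound $E\|X_i-EX_i\|_H^p$ by $2^p E\|X_i\|_H^p$. You are slightly more explicit than the paper (proving the general identity $\|a\otimes b\|_{\operatorname{HS}}=\|a\|_H\|b\|_H$ and spelling out the Jensen/triangle-inequality argument for the moment bound, which the paper simply calls ``obvious''), but the substance is the same.
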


\begin{proof} Obviously, $E\left\|X_i-EX_i\right\|_H^p<\infty$. By the definition of the Hilbert-Schmidt-norm we get for some orthonormal basis $(b_n)_{n\in\N}$ of $H$
\begin{multline*}
\left\|(X_i-EX_i)\otimes(X_i-EX_i)\right\|_{\operatorname{HS}}^2\\
=\sum_{n=1}^{\infty}\langle \langle b_n,(X_i-EX_i) \rangle_H (X_i-EX_i), \langle b_n,(X_i-EX_i) \rangle_H (X_i-EX_i) \rangle_{H}\\
=\langle (X_i-EX_i),(X_i-EX_i) \rangle_H \sum_{n=1}^{\infty} \langle b_n,(X_i-EX_i) \rangle_H , \langle b_n,(X_i-EX_i) \rangle_H \\
=\left\|X_i-EX_i\right\|_H^2\sum_{n=1}^\infty \langle b_n,(X_i-EX_i) \rangle_H^2=\left\|X_i-EX_i\right\|_H^4,
\end{multline*}
where we used the Parseval identity in the last step. So $\left\|(X_i-EX_i)\otimes(X_i-EX_i)\right\|_{\operatorname{HS}}=\left\|X_i-EX_i\right\|_H^2$ and consequently $E\left\|(X_i-EX_i)\otimes(X_i-EX_i)\right\|_{\operatorname{HS}}^{p/2}\leq 2^{p}E\left\|X_i\right\|^{p}$. This implies the statement of the lemma.

\end{proof}

\begin{lem}\label{lemNED} If the sequence $(X_n)_{n\in\Z}$ satisfies  $E\left\|X_n\right\|_H^2<\infty$ and is $L_2$-NED on $(\xi_n)_{n\in\Z}$ with approximation constants $(a_m)_{m\in\N}$, then the sequence $((X_n-EX_n)\otimes(X_n-EX_n))_{n\in\Z}$ is $L_1$-NED on $(\xi_n)_{n\in\Z}$ with approximation constants $(C\sqrt{2a_m})_{m\in\N}$ for some $C<\infty$.
\end{lem}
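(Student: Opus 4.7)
The natural candidate for the $L_1$-approximating function is
$$
g_m(\xi_{-m},\ldots,\xi_m):=\bigl(f_m(\xi_{-m},\ldots,\xi_m)-Ef_m(\xi_{-m},\ldots,\xi_m)\bigr)^{\otimes 2},
$$
where $f_m$ is the $L_2$-approximating function for $X_0$. Writing $a:=X_0-EX_0$ and $b:=f_m(\xi_{-m},\ldots,\xi_m)-Ef_m(\xi_{-m},\ldots,\xi_m)$, the task is to estimate $E\|a\otimes a-b\otimes b\|_{\operatorname{HS}}$.

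First I would use the bilinearity identity
$$
a\otimes a-b\otimes b=(a-b)\otimes a+b\otimes(a-b),
$$
together with the fact (already used in the proof of Lemma \ref{lemMoment}) that $\|u\otimes v\|_{\operatorname{HS}}=\|u\|_H\|v\|_H$, to obtain the pointwise bound
$$
\|a\otimes a-b\otimes b\|_{\operatorname{HS}}\le\|a-b\|_H\bigl(\|a\|_H+\|b\|_H\bigr).
$$
Next I apply Cauchy--Schwarz:
$$
E\|a\otimes a-b\otimes b\|_{\operatorname{HS}}\le\sqrt{E\|a-b\|_H^2}\cdot\sqrt{E\bigl(\|a\|_H+\|b\|_H\bigr)^2}.
$$

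For the first factor, since $a-b=(X_0-f_m)-E(X_0-f_m)$, the elementary inequality $\|u-Eu\|_H^2\le 2\|u\|_H^2+2\|Eu\|_H^2$ and Jensen's inequality give
$$
E\|a-b\|_H^2\le 2E\|X_0-f_m\|_H^2+2\bigl(E\|X_0-f_m\|_H\bigr)^2\le 4a_m,
$$
so this factor is bounded by $2\sqrt{a_m}$. For the second factor, $E\|a\|_H^2\le E\|X_0\|_H^2$ is finite by assumption, and $E\|b\|_H^2\le E\|f_m\|_H^2\le 2E\|X_0\|_H^2+2a_m$ is bounded uniformly in $m$ (say for all $m$ large enough that $a_m\le 1$; the finitely many remaining $m$ can be absorbed in the constant $C$). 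Hence
$$
E\bigl(\|a\|_H+\|b\|_H\bigr)^2\le 2E\|a\|_H^2+2E\|b\|_H^2\le K
$$
for a finite constant $K$ depending only on $E\|X_0\|_H^2$. Combining the two estimates yields $E\|Y_0-g_m\|_{\operatorname{HS}}\le 2\sqrt{K}\sqrt{a_m}$, which has the form $C\sqrt{2a_m}$ with $C=\sqrt{2K}$, and the same bound holds at every index $n$ by stationarity.

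The proof presents no real obstacle: the only subtlety is being careful with the centering, so that the approximant $g_m$ is still a deterministic function of $\xi_{-m},\ldots,\xi_m$ (the recentering by $Ef_m(\xi_{-m},\ldots,\xi_m)$ is by a constant) and so that the loss incurred by replacing $EX_0$ with $Ef_m$ is itself controlled by $a_m$ via Jensen's inequality. Everything else reduces to the tensor norm identity and Cauchy--Schwarz.
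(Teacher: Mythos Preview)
Your proof is correct and follows essentially the same route as the paper: the same bilinear decomposition $a\otimes a-b\otimes b=(a-b)\otimes a+b\otimes(a-b)$, the identity $\|u\otimes v\|_{\operatorname{HS}}=\|u\|_H\|v\|_H$, and Cauchy--Schwarz. The only cosmetic difference is that the paper assumes $EX_n=0$ without loss of generality and takes $g_m=f_m\otimes f_m$, whereas you keep the centering explicit and use $g_m=(f_m-Ef_m)^{\otimes2}$; both lead to the same bound.
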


\begin{proof} Without loss of generality, we can assume that $EX_n=0$. Let $f_m:S^{2m+1}\rightarrow H$ be a function, such that 
\begin{equation*}
E\left\|X_0-f_m(\xi_{-m},\ldots,\xi_m)\right\|^2_H\leq a_m.
\end{equation*}
Define the function $g_m:S^{2m+1}\rightarrow H\otimes H$ by
\begin{equation*}
g_m(\xi_{-m},\ldots,\xi_m)=f_m(\xi_{-m},\ldots,\xi_m)\otimes f_m(\xi_{-m},\ldots,\xi_m),
\end{equation*}
then
\begin{multline*}
E\left[\big\|X_0 \otimes X_0-g_m(\xi_{-m},\ldots,\xi_m)\big\|_{\operatorname{HS}}\right]\\
\leq E\left[\big\|(X_0-f_m(\xi_{-m},\ldots,\xi_m))\otimes X_0\big\|_{\operatorname{HS}}\right]\\
+E\left[\big\|f_m(\xi_{-m},\ldots,\xi_m)\otimes (X_0-f_m(\xi_{-m},\ldots,\xi_m))\big\|_{\operatorname{HS}}\right]
\end{multline*}
Now as in the proof of Lemma \ref{lemMoment}, for any vectors x, y, we have $\|x\otimes y\|_{\operatorname{HS}}=\|x\|_H\|y\|_H$,
so with the help of the H\"older inequality
\begin{multline*}
E\left[\big\|(X_0-f_m(\xi_{-m},\ldots,\xi_m))\otimes X_0\big\|_{\operatorname{HS}}\right]\\
= E\left[\big\|X_0-f_m(\xi_{-m},\ldots,\xi_m)\big\|_H \big\|X_0\big\|_H\right]\\
\leq \sqrt{E\left[\big\|X_0-f_m(\xi_{-m},\ldots,\xi_m)\big\|_H^2\right]}\sqrt{E\left[ \big\|X_0\big\|_H^2\right]}\leq \sqrt{E\left[ \big\|X_0\big\|_H^2\right]} \sqrt{a_m}.
\end{multline*}
Similarly,
\begin{multline*}
E\left[\big\|f_m(\xi_{-m},\ldots,\xi_m)\otimes (X_0-f_m(\xi_{-m},\ldots,\xi_m))\big\|_{\operatorname{HS}}\right]\\
\leq\sqrt{E\left[ \big\| f_m(\xi_{-m},\ldots,\xi_m)\big\|_H^2\right]} \sqrt{a_m}\leq\left(\sqrt{E\left[ \big\|X_0\big\|_H^2\right]} +\sqrt{a_m}\right)\sqrt{a_m}.
\end{multline*}
\end{proof}

\subsection{Proof of the Main Results}

\begin{proof}[Proof of Theorem \ref{limitfixed}] By Lemma \ref{lemMoment} and Lemma \ref{lemNED}, the sequence $((X_n-EX_n)\otimes(X_n-EX_n))_{n\in\Z}$ satisfies the assumptions of Theorem 1.1 by Dehling, Sharipov, Wendler \cite{dehling2015}. So we have
\begin{equation*}
\frac{1}{\sqrt{n}}\sum_{i=1}^n\Big((X_i-E[X_i])\otimes (X_i-E[X_i])-E\big[(X_i-E[X_i])\otimes (X_i-E[X_i])\big]\Big)\Rightarrow N(0,C_\infty),
\end{equation*}
where $C_{\infty}$ is defined in (\ref{LRC}) and $E[(X_i-E[X_i])\otimes (X_i-E[X_i])]=V$. It remains to show that
\begin{equation*}
D_n:=\frac{1}{\sqrt{n}}\sum_{i=1}^n(X_i-E[X_i])\otimes (X_i-E[X_i])-\sqrt{n}\hat{V}_n\rightarrow 0
\end{equation*}
in probability as $n\rightarrow \infty$. By  a short calculation
\begin{multline*}
D_n=\frac{1}{\sqrt{n}}\sum_{i=1}^n(X_i-E[X_i])\otimes (X_i-E[X_i])-\frac{1}{\sqrt{n}}\sum_{i=1}^n(X_i-\bar{X})\otimes (X_i-\bar{X})\\
=\sqrt{n}\left(\big(\bar{X}-E[X_1]\big)\otimes\bar{X}+\bar{X}\otimes\big(\bar{X}-E[X_1]\big)+E[X_1]\otimes E[X_i]-\bar{X}\otimes\bar{X}\right)\\
=\left(\sqrt{n}\big(\bar{X}-E[X_1]\big)\right)\otimes\big(\bar{X}-E[X_1]\big).
\end{multline*}
By Theorem 1.1 in \cite{dehling2015}, $\sqrt{n}(\bar{X}-E[X_1])$ converges in distribution and is thus stochastically bounded, and $(\bar{X}-E[X_1])$ converges to 0 in probability. This completes the proof.
\end{proof}

\begin{proof}[Proof of Theorem \ref{limitseq}]  By Lemma \ref{lemMoment} and Lemma \ref{lemNED}, we can apply Theorem 1 of Sharipov, Tewes, Wendler \cite{sharipov2016} to obtain the weak convergence of $\tilde{W}_n$, defined by
\begin{equation*}
\tilde{W}_n(t):=\frac{1}{\sqrt{n}}\sum_{i=1}^{[nt]}\Big((X_i-E[X_i])\otimes (X_i-E[X_i])-V_X\Big),
\end{equation*}
to $W$. It remains to show that $\sup_{t\in[0,1]}\|\tilde{D}_{[nt],n}\|_{\operatorname{HS}}\rightarrow 0$ in probability as $n\rightarrow\infty$, where
\begin{multline*}
\tilde{D}_{m,n}:=\tilde{W}_n(m/n)-W_n(m/n)\\
=\frac{1}{\sqrt{n}}\sum_{i=1}^m\Big((X_i-E[X_i])\otimes (X_i-E[X_i])-(X_i-\bar{X})\otimes (X_i-\bar{X})\Big).
\end{multline*}
Set $\bar{X}_m=\frac{1}{m}\sum_{i=}^m X_i$. With some calculations, we obtain
\begin{multline*}
\tilde{D}_{m,n}=\frac{m}{\sqrt{n}}\Big((\bar{X}_n-E[X_1])\otimes \bar{X}_m+\bar{X}_m\otimes(\bar{X}_n-E[X_1])+E[X_1]\otimes E[X_1]-\bar{X}_n\otimes\bar{X}_n\Big)\\
=\frac{m}{\sqrt{n}}\Big((\bar{X}_n-EX_1)\otimes (\bar{X}_m-EX_1)+(\bar{X}_m-EX_1)\otimes(\bar{X}_n-EX_1)-(\bar{X}_n-EX_1)\otimes(\bar{X}_n-EX_1)\Big).
\end{multline*}
Now by Theorem 1 in \cite{sharipov2016}
\begin{equation*}
\max_{m=1,\ldots, n}\frac{m}{\sqrt{n}}\big\|\bar{X}_{m}-E[X_1]\big\|_{H}
\end{equation*}
converges weakly to the maximum norm of a $H$-valued Brownian motion and thus this expression is stochastically bounded. As in the proof of Theorem \ref{limitfixed}, $(\bar{X}_n-E[X_1])$ converges to 0 in probability, so
\begin{multline*}
\max_{m=1,\ldots, n}\Big\|\frac{m}{\sqrt{n}}(\bar{X}_n-E[X_1])\otimes (\bar{X}_m-E[X_1])\Big\|_{\operatorname{HS}}\\
=\max_{m=1,\ldots, n}\frac{m}{\sqrt{n}}\big\|\bar{X}_m-E[X_1]\big\|_{H}\big\|\bar{X}_n-E[X_1]\big\|_H\rightarrow 0
\end{multline*}
in probability as $n\rightarrow\infty$. Similar arguments for the other summands of $\tilde{D}_{m,n}$ complete the proof.
\end{proof}

\begin{proof}[Proof of Theorem \ref{bootfixed}] By Lemma \ref{lemMoment} and Lemma \ref{lemNED}, the sequence $((X_n-EX_n)\otimes(X_n-EX_n))_{n\in\Z}$ satisfies the assumptions of Theorem 1.2 in \cite{dehling2015} and we obtain
\begin{equation*}
\frac{1}{\sqrt{pk}}\sum_{i=1}^{pk}\Big((X_i^\star-E[X_i])\otimes (X_i^\star-E[X_i])-E^\star\big[(X_i^\star-E[X_i])\otimes (X_i^\star-E[X_i])\big]\Big)\Rightarrow N(0,C_\infty)
\end{equation*}
almost surely conditional on $X_1,\ldots,X_n$. Without loss of generality, we can assume that $E[X_i]=0$. It remains to show that
\begin{equation*}
D_n^\star:=\frac{1}{\sqrt{pk}}\sum_{i=1}^{pk}\big(X_i^\star\otimes X_i^\star-E^\star\big[X_i^\star\otimes X_i^\star\big]\big)-\sqrt{pk}\big(V_n^\star-\hat{V}_n\big)\xrightarrow{n\rightarrow\infty}0
\end{equation*}
in conditional probability. Let us introduce some notation: Define $\bar{X}_m:=\frac{1}{m}\sum_{i=1}^m X_i$, $\overline{X\otimes X}_m:=\frac{1}{m}\sum_{i=1}^m X_i\otimes X_i$, $\bar{X}_m^\star:=\frac{1}{m}\sum_{i=1}^m X_i^\star$, $\overline{X\otimes X}_m^\star:=\frac{1}{m}\sum_{i=1}^m X_i^\star \otimes X_i^\star$. With this notation, we can write
\begin{equation*}
E^\star\left[X_i^\star\right]=\bar{X}_{kp},\ \ \ E^\star\left[X_i^\star\otimes X_i^\star\right]=\overline{X\otimes X}_{kp}.
\end{equation*}
After some calculations, we arrive at
\begin{multline*}
D_n^\star=\sqrt{kp}\Big(-\overline{X\otimes X}_{kp}+\bar{X}_{kp}^\star\otimes \bar{X}_{kp}^\star+\overline{X\otimes X}_n-\bar{X}_n\otimes\bar{X}_n\Big)\\
=\sqrt{kp}\big(\bar{X}_{kp}^\star-\bar{X}_n\big)\otimes\big(\bar{X}_{kp}^\star-\bar{X}_n\big)+\sqrt{kp}\big(\bar{X}_{kp}^\star-\bar{X}_n\big)\otimes \bar{X}_n\\
+\sqrt{kp}\bar{X}_n\otimes\big(\bar{X}_{kp}^\star-\bar{X}_n\big) +\sqrt{kp}\Big(\overline{X\otimes X}_n-\overline{X\otimes X}_{kp}\Big)=: I_n+I\! I_n+ I\! I\! I_n+ I\! V_n.
\end{multline*}
Note that
\begin{equation*}
\left\|\sqrt{pk}\left(E^\star[X_i^\star]-\bar{X}_n\right)\right\|_{H}\leq \frac{\sqrt{pk}}{nk}\left\| \bar{X}_{kp}\right\|_H+\frac{\sqrt{kp}}{n}\Big\|\sum_{i=kp+1}^n X_i\Big\|_H.
\end{equation*}
By Theorem 1.1 in \cite{dehling2015}, $E\big\|\sum_{i=l+1}^m X_i\big\|_H^2\leq C(m-l)$ for some constant $C$, so with the help of Chebyshev's inequality, we see that $\left\|\sqrt{pk}\left(E^\star[X_i^\star]-\bar{X}_n\right)\right\|_{H}\rightarrow 0$ in probability as $n\rightarrow\infty$. Combining this with Theorem 1.1 in \cite{dehling2015}, we can conclude that
\begin{equation*}
\sqrt{kp}\big(\bar{X}_{kp}^\star-\bar{X}_n\big)=\sqrt{kp}\big(\bar{X}_{kp}^\star-E^\star[X_i^\star]\big)+\sqrt{kp}\big(E^\star[X_i^\star]-\bar{X}_n\big)
\end{equation*}
converges weakly to a normal distribution conditional on $(X_n)_{n\in\N}$. Consequently
\begin{equation*}
I_n=\frac{1}{\sqrt{kp}}\big(\sqrt{kp}\big(\bar{X}_{kp}^\star-\bar{X}_n\big)\big)\otimes\big(\sqrt{kp}\big(\bar{X}_{kp}^\star-\bar{X}_n\big)\big)\rightarrow 0
\end{equation*}
as $n\rightarrow \infty$ in probability conditional  on $X_1,\ldots,X_n$. Because $E\big\|\sum_{i=1}^n X_i\big\|_H^2\leq Cn$, we also have the convergence of $\bar{X}_n$ to 0 in probability by standard arguments, so
\begin{align*}
I\! I_n&=\sqrt{kp}\big(\bar{X}_{kp}^\star-\bar{X}_n\big)\otimes \bar{X}_n\rightarrow  0,\\
I\! I\! I_n&=\bar{X}_n\otimes\big(\sqrt{kp}\big(\bar{X}_{kp}^\star-\bar{X}_n\big)\big)\rightarrow  0
\end{align*}
as $n\rightarrow \infty$ in probability conditional  on $X_1,\ldots,X_n$. Finally, we apply Theorem 1.1 in \cite{dehling2015} to the sequence $(X_n\otimes X_n)_{n\in \Z}$ to obtain $E\|\sum_{i=l+1}^m X_i\otimes X_i\|_{\operatorname{HS}}\leq C_2 (m-l)$ for some constant $C_2<\infty$. Thus
\begin{equation*}
E\left[\big\|I\! V_n \big\|_{\operatorname{HS}}^2\right]\leq 2E\bigg[\Big\|\frac{\sqrt{pk}}{nk} \overline{X\otimes X}_{kp}\Big\|_{\operatorname{HS}}^2\bigg]+2E\bigg[\Big\|\frac{\sqrt{kp}}{n}\sum_{i=kp+1}^n (X_i\otimes X_i)\Big\|_H^2\bigg]\rightarrow 0,
\end{equation*}
and this completes the proof.
\end{proof}

\begin{proof}[Proof of Theorem \ref{bootseq}] By Lemma \ref{lemMoment} and Lemma \ref{lemNED}, we can apply Theorem 2 of Sharipov, Tewes, Wendler \cite{sharipov2016} to obtain the weak convergence of $\check{W}_n$ defined by
\begin{equation*}
\check{W}_n(t):=\frac{1}{\sqrt{pk}}\sum_{i=1}^{[pkt]}\Big((X_i^\star-E[X_i])\otimes (X_i^\star-E[X_i])-E^\star\big[(X_i^\star-E[X_i])\otimes (X_i^\star-E[X_i])\big]\Big)
\end{equation*}
to $W$. Without loss of generality, we can assume that $E[X_i]=0$. As in the proof of Theorem \ref{bootfixed}, we use the notation $\bar{X}_m:=\frac{1}{m}\sum_{i=1}^m X_i$, $\overline{X\otimes X}_m:=\frac{1}{m}\sum_{i=1}^m X_i\otimes X_i$, $\bar{X}_m^\star:=\frac{1}{m}\sum_{i=1}^m X_i^\star$, $\overline{X\otimes X}_m^\star:=\frac{1}{m}\sum_{i=1}^m X_i^\star \otimes X_i^\star$. We have to show that $\sup_{t\in[0,1]}\|\check{D}_{[pkt],h}\|_{\operatorname{HS}}\rightarrow 0$ in conditional probability, where
\begin{multline*}
\check{D}_{m,n}:=\check{W}_n(m/(kp))-W^\star(m/(kp))\\
=\frac{1}{\sqrt{kp}}\sum_{i=1}^{m}\big(X_i^\star \otimes X_i^\star-(X_i^\star-\bar{X}^\star_{kp}) \otimes (X_i^\star-\bar{X}^\star_{kp})+\hat{V}_n-\overline{X\otimes X}_{kp}\big)
\end{multline*}
As in the proof of Theorem \ref{bootseq}, we have $\sqrt{pk}\left(\overline{X\otimes X}_{kp}-\overline{X\otimes X}_{n}\right)\rightarrow 0$ in probability as $n\rightarrow\infty$. Furthermore, we have by the proof of Theorem \ref{limitfixed} that $D_n=\sqrt{n}(\overline{X\otimes X}_{n}-\hat{V}_n)\rightarrow 0$ in probability, so that we can conclude that
\begin{multline*}
\max_{m=1,\ldots, (kp)}\frac{m}{\sqrt{pk}}\left\|\hat{V}_n-\overline{X\otimes X}_{kp}\right\|_{\operatorname{HS}}\\
\leq \sqrt{pk}\left\|\overline{X\otimes X}_{kp}-\overline{X\otimes X}_{n}\right\|_{\operatorname{HS}}+\sqrt{pk}\left\|\overline{X\otimes X}_{n}-\hat{V}_n\right\|_{\operatorname{HS}}\xrightarrow{n\rightarrow\infty}0
\end{multline*}
in probability. Furthermore
\begin{multline*}
\frac{1}{\sqrt{kp}}\sum_{i=1}^{m}\big(X_i^\star \otimes X_i^\star-(X_i^\star-\bar{X}^\star_{kp}) \otimes (X_i^\star-\bar{X}^\star_{kp})\big)\\
=\Big(\frac{1}{\sqrt{kp}}\sum_{i=1}^{m}X_i^\star\Big)\otimes \bar{X}^\star_{kp}+\bar{X}^\star_{kp}\otimes\Big(\frac{1}{\sqrt{kp}}\sum_{i=1}^{m}X_i^\star\Big) +\frac{m}{\sqrt{pk}}\bar{X}^\star_{kp}\otimes \bar{X}^\star_{kp}
\end{multline*}
By Theorem 1.1 and 1.2 of \cite{dehling2015}, we have that
\begin{equation*}
\sqrt{pk}\bar{X}^\star_{kp}=\sqrt{pk}\big(\bar{X}^\star_{kp}-\bar{X}_{kp}\big)+\sqrt{pk}\bar{X}_{kp}
\end{equation*}
is stochastically bounded. Similarly, by Theorem 1 and 2 of \cite{sharipov2016}
\begin{equation*}
\max_{m=1,\ldots,pk}\Big\|\frac{1}{\sqrt{kp}}\sum_{i=1}^{m}X_i^\star\Big\|_{H}\leq \max_{m=1,\ldots,pk}\Big\|\frac{1}{\sqrt{kp}}\sum_{i=1}^{m}\big(X_i^\star-\bar{X}_{kp}^{\star}\big)\Big\|_{H}+\sqrt{pk}\big\|\bar{X}_{kp}^{\star}\big\|_{H}
\end{equation*}
is stochastically bounded. So we can conclude that
\begin{multline*}
\max_{m=1,\ldots,pk}\Big\|\frac{1}{\sqrt{kp}}\sum_{i=1}^{m}\big(X_i^\star \otimes X_i^\star-(X_i^\star-\bar{X}^\star_{kp}) \otimes (X_i^\star-\bar{X}^\star_{kp})\big)\Big\|_{\operatorname{HS}}\\
\leq \frac{1}{\sqrt{pk}}\bigg(2\max_{m=1,\ldots,pk}\Big\|\frac{1}{\sqrt{kp}}\sum_{i=1}^{m}X_i^\star\Big\|_{H}\big\| \sqrt{pk}\bar{X}^\star_{kp}\big\|_{H}+\big\| \sqrt{pk}\bar{X}^\star_{kp}\big\|_{H}^2\bigg)\xrightarrow{n\rightarrow\infty}0
\end{multline*}
in probability conditional on $X_1,\ldots,X_n$. The uniform convergence of $\check{D}_{m,n}$ and thus the statement of the Theorem follows.

\end{proof}

\section*{Acknowledgement} We are grateful to three anonymous referees for their thoughtful comments, which helped to improve this article. We also would like to thank Lea Wegner for her help concerning language corrections. The research was supported by the German Research Foundation (DFG), project WE 5988/3 \emph{Analyse funktionaler Daten ohne Dimensionsreduktion}.

\small

\end{document}